\pgfplotsset{compat=1.18}
\definecolor{meinBlau}{rgb}{0.2,0.2,0.9} %color for in-document links
\definecolor{blau}{rgb}{0,0,0.75} %color for in-document links
\definecolor{rot}{rgb}{0.74,0,0} %color for in-document links
\newtheorem{theorem}{Theorem}
\newtheorem{lem}[theorem]{Lemma}
\newtheorem{prop}{Proposition}
\theoremstyle{definition}
\newtheorem{remark}{Remark}
\newtheorem{defi}{Definition}
\def\P{{\mathbb {P}}}
\def\E{{\mathbb {E}}}
\newcommand{\fallfak}[2]{\ensuremath{#1^{\underline{#2}}}}
\newcommand{\auffak}[2]{\ensuremath{#1^{\overline{#2}}}}
\newcommand{\N}{\ensuremath{\mathbb{N}}}
\DeclareMathOperator{\law}{\overset{\mathcal{L}}{=}}
\DeclareMathOperator{\claw}{\overset{\mathcal{L}}{\rightarrow}}
\DeclareMathOperator{\Levy}{\text{L\'evy}}
\DeclareMathOperator{\LinExp}{LinExp}
\DeclareMathOperator{\Bin}{B}
\DeclareMathOperator{\Be}{Be}
\begin{document}

\author[M.~Kuba]{Markus Kuba}
\address{Markus Kuba\\
Department Applied Mathematics and Physics\\
University of Applied Sciences - Technikum Wien\\
H\"ochst\"adtplatz 5, 1200 Wien} %
\email{kuba@technikum-wien.at}

\author[A.~Panholzer]{Alois Panholzer}
\address{Alois Panholzer\\
Institut f{\"u}r Diskrete Mathematik und Geometrie\\
Technische Universit\"at Wien\\
Wiedner Hauptstr. 8-10/104\\
1040 Wien, Austria} \email{Alois.Panholzer@tuwien.ac.at}

\title[Limit law for one-time riffle shuffle]{On Card guessing games: limit law for one-time riffle shuffle}

\keywords{Card guessing, riffle shuffle, two-color card guessing game, limit law}%
\subjclass[2000]{05A15, 05A16, 60F05, 60C05} %

\begin{abstract}
We consider a card guessing game with complete feedback. A ordered deck of
$n$ cards labeled $1$ up to $n$ is riffle-shuffled exactly one time. 
Then, the goal of the game is to maximize the
number of correct guesses of the cards, where one after another a single card is drawn from the top, 
and shown to the guesser until no cards remain. 
Improving earlier results, we provide a limit law for the number of correct guesses. 
As a byproduct, we relate the number of correct guesses in this card guessing game 
to the number of correct guesses under a two-color card guessing game with complete feedback.
Using this connection to two-color card guessing, we can also show a limiting distribution result for the first occurrence of a pure luck guess.
\end{abstract}

\maketitle

\section{Introduction}
Different card guessing games have been considered in the literature in many articles~\cite{Diaconis1978,DiaconisGraham1981,HeOttolini2021,KnoPro2001,PK2023,KuPanPro2009,Leva1988,OttoliniSteiner2022,OT2023,Read1962,Zagier1990}. 
An often discussed setting is the following. A deck of a total of $M$ cards is shuffled, 
and then the guesser is provided with the total number of cards $M$, as well as with the individual numbers 
of say hearts, diamonds, clubs and spades. After each guess of the type of the next card, the person guessing the cards is shown the drawn card, which is then removed from the deck. This process is continued until no more cards are left. 
Assuming that the guesser tries to maximize the number of correct guesses, one is interested in the total number of correct guesses. 
Such card guessing games are not only of purely mathematical interest, but there are applications to the analysis of clinical trials~\cite{BlackwellHodges1957,Efron1971}, fraud detection related to 
extra-sensory perceptions~\cite{Diaconis1978}, guessing so-called Zener Cards~\cite{OttoliniSteiner2022}, 
as well as relations to tea tasting and the design of statistical experiments~\cite{Fisher1936,OT2023}.

\smallskip

The card guessing procedure can be generalized to an arbitrary number $n\ge 2$ of different types of cards. 
In the simplest setting there are two colors, red (hearts and diamonds) and black (clubs and spades), 
and their numbers are given by non-negative integers $m_1$, $m_2$, with $M=m_1+m_2$. One is then interested in the random variable $C_{m_1,m_2}$, counting the number of correct guesses. Here, not only the distribution and the expected value of the number of correct guesses is known~\cite{DiaconisGraham1981,KnoPro2001,Leva1988,Read1962,Zagier1990}, but also multivariate limit laws and additionally interesting relations to combinatorial objects such as Dyck paths
and urn models are given~\cite{DiaconisGraham1981,PK2023,KuPanPro2009}. For the general setting of $n$ different types of cards we refer the reader to~\cite{DiaconisGraham1981,HeOttolini2021,OttoliniSteiner2022,OT2023} for recent developments.

\smallskip

Different models of card guessing games involving so-called riffle shuffles are also of importance and are the main topic of this work. 
Liu~\cite{Liu2021} and also Krityakierne and Thanatipanonda~\cite{KT2023} 
studied a card guessing game carried out after a single riffle shuffle under the famous \emph{Gilbert–Shannon–Reeds} model (see Subsection~\ref{Subsection_GSR} for details): one starts with an ordered deck of $n$ cards, labeled one up to $n$, and the deck is once riffle shuffled. The number of correct guesses $X_n$, assuming that complete feedback is given, i.e., the drawn card is shown to the guessing person, and further assuming that the guesser is using the optimal strategy, is then of interest. An analysis of this procedure including an asymptotic expansion
of the expected value $\E(X_n)$ has been given in~\cite{Liu2021}. An enumerative analysis and a study of higher moments has been carried out in~\cite{KT2023}. Therein, precise asymptotics of the first few moments are provided using both enumerative and symbolic methods. 

\smallskip

In this work we provide more insight into the number of correct guesses, starting with $n$ cards labeled one up to $n$, once riffle shuffled.
We translate the enumerative analysis of~\cite{KT2023} into a distributional equation. Using a direct link between the number of correct guesses $C_{m_1,m_2}$ in the two-color card guessing game and the corresponding quantity $X_{n}$ in the once riffle shuffled model, previously unknown best to the knowledge of the authors, we obtain a limit law for the number $X_n$ of correct guesses in the once riffle shuffled case. 

\smallskip 

\subsection{Notation}
As a remark concerning notation used throughout this work, we always write $X \law Y$ to express equality in distribution of two random variables (r.v.) $X$ and $Y$, and $X_{n} \claw X$ for the weak convergence (i.e., convergence in distribution) of a sequence
of random variables $X_{n}$ to a r.v.\ $X$. Furthermore we use $\fallfak{x}{s}:=x(x-1)\dots(x-(s-1))$ for the falling factorials, and $\auffak{x}{s}:=x(x+1)\dots(x+s-1)$ for the rising factorials, $s\in\N_0$. Moreover, $f_{n} \ll g_{n}$ denotes that a sequence $f_{n}$ is asymptotically smaller than a sequence $g_{n}$, i.e., $f_{n} = o(g_{n})$, $n \to \infty$.
%We further use $\mathcal{S}_{n}$ for the set of permutations on $[n] := \{1, 2, \dots, n\}$.

\section{Distributional analysis}
\subsection{Riffle shuffle model~\label{Subsection_GSR}}
A riffle shuffle is a certain card shuffling technique. In the mathematical modeling of card shuffling, the \emph{Gilbert–Shannon–Reeds} model~\cite{DiaconisGraham1981,Gilbert1955} describes a probability distribution for the outcome of such a shuffling. 
We consider a sorted deck of $n$ cards labeled consecutively from 1 up to $n$. 
The deck of cards is cut into two packets, assuming that
the probability of selecting $k$ cards in the first packet and $n-k$ in the second packet is defined as a binomial distribution with parameters $n$ and $1/2$:
\[
\frac{\binom{n}k}{2^n},\quad 0\le k\le n.
\]
Afterwards, the two packets are interleaved back into a single pile: one card at a time 
is moved from the bottom of one of the packets to the top of the shuffled deck, such that if $m_1$ cards remain in the first and $m_2$ cards remain in the second packet, then the probability of choosing a card from the first packet is $m_1/ ( m_1 + m_2 )$ and the probability of choosing a card from the second packet is $m_2 / ( m_1 + m_2 )$. See Figure~\ref{fig:RiffleMerge} for an example of a riffle shuffle of a deck of five cards.
\begin{figure}[!htb]
\includegraphics[scale=0.55]{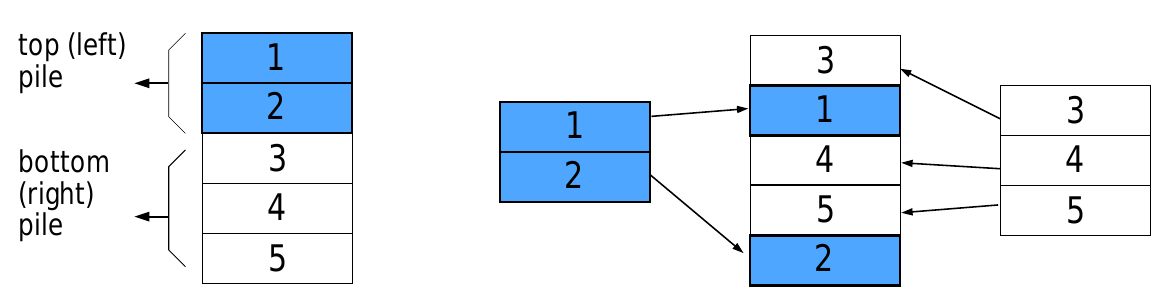}%
\caption{Example of a one-time riffle shuffle: a deck of five cards is split after 2 with probability $\binom52/2^5=5/16$ and then interleaved.}%
\label{fig:RiffleMerge}%
\end{figure}
For a one-time shuffle, the operation of interleaving described above gives rise to an ordered
deck (corresponding to the identity permutation) with multiplicity $n + 1$. 
Each other shuffled deck corresponds to a permutation which contains exactly two proper increasing subsequences
and each has multiplicity 1; in total there are $2^n- n - 1$ different such permutations.

In a more combinatorial setting, the outcome of a one-time shuffling in this model might be generated from the $2^{n}$ different $\{a,b\}$-sequences of length $n$, i.e., length-$n$ words over the alphabet $\{a,b\}$, by replacing the $a$'s in such a sequence, let us assume there are $0 \le k \le n$ many, by the increasing sequence $1, 2, \dots, k$, and the $b$'s in the sequence by the increasing sequence $k+1, k+2, \dots, n$. Thus, the $a$'s and $b$'s, respectively, correspond to the packet of cards below and above the cut, respectively. Let us denote by $\mathcal{D}_{n}$ this multiset of permutations on $[n]=\{1, 2, \dots, n\}$ generated by the family $\mathcal{W}_{n} = \{a,b\}^{n}$ of length-$n$ words. Then the $n+1$ words in $\mathcal{W}_{n}$ of the kind $a^{k} b^{n-k}$, with $0 \le k \le n$, all generate the identity permutation $\text{id}_{n}$ in $\mathcal{D}_{n}$, whereas the remaining $2^{n}-n-1$ words in $\mathcal{W}_{n}$ generate pairwise different permutations in $\mathcal{D}_{n}$.

\subsection{First drawn card and the optimal strategy}
The optimal strategy for maximizing the number $X_n$ of correctly guessed cards, starting with a deck of $n$ cards, after a one-time riffle shuffle stems from the following Proposition.
\begin{prop}[Guessing the first card~\cite{KT2023,Liu2021}]
\label{Prop:firstCard}
Assume that a deck of $n$ cards has been riffle shuffled once. 
The probability $p_n(m)$ that the first card being $m$, $1\le m \le n$, 
is given by
\begin{equation}
p_n(m) = 
\begin{cases}
\displaystyle{\frac12 + \frac1{2^n}},\quad \text{for }m=1,\\[0.2cm]
\displaystyle{\frac{\binom{n-1}{m-1}}{2^n}},\quad \text{for } 2\le m\le n.
\end{cases}
\end{equation}
\end{prop}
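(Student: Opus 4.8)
The plan is to work entirely within the combinatorial word model introduced above, where the outcome of the one-time shuffle is drawn uniformly from the family $\mathcal{W}_n = \{a,b\}^n$, each of the $2^n$ words carrying probability $1/2^n$. The key observation is that the first (top) card of the shuffled deck is completely determined by the first letter $w_1$ of the word $w = w_1 w_2 \cdots w_n$ together with the number $k$ of $a$'s in $w$: since the $a$-positions receive the increasing labels $1, 2, \dots, k$ and the $b$-positions the labels $k+1, \dots, n$, the card in position one equals $1$ when $w_1 = a$ and equals $k+1$ when $w_1 = b$. First I would record this dichotomy as the backbone of the argument, so that computing $p_n(m)$ reduces to counting words with a prescribed first letter and a prescribed number of $a$'s.

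For the case $m = 1$ I would collect all words producing a top card equal to $1$. These are exactly the words with $w_1 = a$, of which there are $2^{n-1}$, together with the single all-$b$ word $b^n$ (for which $k = 0$ and hence $k+1 = 1$). As these two families are disjoint, the count is $2^{n-1} + 1$, giving $p_n(1) = (2^{n-1}+1)/2^n = \frac12 + \frac1{2^n}$. For $2 \le m \le n$, a top card equal to $m$ forces $w_1 = b$ and $k = m-1 \ge 1$; fixing the first letter to be $b$ and choosing which $m-1$ of the remaining $n-1$ positions carry an $a$ yields $\binom{n-1}{m-1}$ such words, whence $p_n(m) = \binom{n-1}{m-1}/2^n$.

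The step I expect to be the only real subtlety is the boundary case $k = 0$: the all-$b$ word does not have $w_1 = a$, yet it still produces top card $1$ rather than a top card $k+1 = m$ with $m \ge 2$, and overlooking it would lose precisely the additive term $1/2^n$ that distinguishes $p_n(1)$ from a clean $\frac12$. Everything else is bookkeeping. As a final consistency check I would verify normalization, using $\sum_{j=1}^{n-1}\binom{n-1}{j} = 2^{n-1}-1$ to obtain $\sum_{m=1}^n p_n(m) = \frac12 + \frac1{2^n} + \frac{2^{n-1}-1}{2^n} = 1$, confirming that no word has been double-counted or missed.
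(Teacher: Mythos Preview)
Your argument is correct. Both you and the paper carry out a combinatorial count in the same underlying probability space, so the approaches are close relatives rather than genuinely different routes; still, the framing differs in a way worth noting. The paper conditions on the cut position $k$, computes the conditional probability that the top card has a given label (e.g.\ $\P(\sigma_1=1\mid\text{cut at }k)=k/n$), and then sums over $k$ to obtain $p_n(1)=\frac{1}{2^n}+\sum_{k=1}^{n}\frac{\binom{n}{k}}{2^n}\cdot\frac{k}{n}=\frac12+\frac1{2^n}$. Your direct word enumeration bypasses this sum entirely: observing that the top card is $1$ precisely for the $2^{n-1}$ words with $w_1=a$ together with the single word $b^n$ gives the answer in one line. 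For $m\ge 2$ the two computations are essentially identical. What your version buys is a slight streamlining and a transparent explanation of the extra $1/2^n$ as the contribution of the all-$b$ word; what the paper's conditioning buys is a closer match to the probabilistic description of the Gilbert--Shannon--Reeds model (cut, then interleave), which feeds naturally into the optimal-strategy discussion that follows.
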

For the sake of completeness we include a short proof.
\begin{proof}
First, we condition on the cut leading to two decks containing $\{1,\dots, k\}$ and $\{k+1,\dots, n\}$, $0\le k\le n$, which happens with probability $\frac{\binom{n}{k}}{2^{n}}$. 
Each resulting deck has the same probability $1/\binom{n}k$. 
Then, we observe that the probability of a top one is in the case $k>0$ given by
\[
\frac{\binom{n-1}{k-1}}{\binom{n}{k}}=\frac{k}{n}, 
\]
as there are $\binom{n-1}{k-1}$ different ways of choosing the positions of the other cards. Of course, for $k=0$ the top card is always one.
Thus, we obtain
\[
p_{n}(1)=\frac1{2^n} + \sum_{k=1}^{n}\frac{\binom{n}k}{2^n} \cdot \frac{k}{n}
=\frac1{2^n}+\frac12.
\]
Similarly, for $m >1$ we observe that only a cut at $m-1$ may lead to a top card labeled $m$, thus in this situation the
subsequences to be interleaved have to be $1,\dots,m-1$ and $m,\dots, n$. If $m$ is the top card, there are $\binom{n-1}{n-m}$ different ways of choosing the positions of the other cards, which yields
\[
p_{n}(m)=\frac{\binom{n}{m-1}}{2^n}\cdot \frac{\binom{n-1}{n-m}}{\binom{n}{m-1}} =\frac{\binom{n-1}{m-1}}{2^n}.
\]
\end{proof}

Now we turn to the optimal strategy. The guesser should guess 1 on the first
card, as his chance of success is more than $50\%$ by Proposition~\ref{Prop:firstCard}

\smallskip

If the first guess is incorrect, say the shown card has label $m\ge 2$, this implies that the 
cut was made exactly at $m-1$. The person is left with two increasing subsequences $1,2,\dots,m-1$
and $m+1,\dots,n$. The remaining numbers are then guessed according to the proportions of the lengths of the remaining subsequences until no cards are left. 

\smallskip

If the first guess was correct, then the person continues with guessing the number two, etc., i.e., as long as all previous such predictions turned out to be correct, the guesser makes a guess of the number $j$ for the $j$-th card. This is justified, since by considerations as before one can show easily that the probability that the $j$-th card has the number $j$ conditioned on the event that the first $j-1$ cards are the sequence of numbers $1, 2, \dots, j-1$ is for $1 \le j \le n$ given by
\begin{equation*}
  \frac{2^{n-j}+j}{2^{n-j+1}+j-1} = \frac{1}{2} + \frac{(1+j)2^{-(n-j+2)}}{1+(j-1)2^{-(n-j+1)}},
\end{equation*}
and thus exceeds $50\%$.
If such a prediction turns out to be wrong, i.e., gives a number $m>j$ for the $j$-th card, then again one can determine the two involved remaining subsequences $j, j+1, \dots, m-1$ and $m+1, \dots, n$,
and all the numbers of the remaining cards are again guessed according to the proportions of the lengths of the remaining subsequences
until no cards are left.

\subsection{Enumeration and distributional decomposition}
Our starting point is the recurrence relation for the generating function
\begin{equation*}
  D_{n}(q) := \sum_{\sigma \in \mathcal{D}_{n}} q^{\text{$\#$ correct guesses for deck $\sigma$}} = 2^{n} \cdot \E(q^{X_{n}}) = 2^{n} \sum_{\ell=0}^{n} \P(X_{n}=\ell) \, q^{\ell},
\end{equation*}
counting the number of correct guesses using the optimal strategy when starting with a once-shuffled deck of $n$ different cards, 
which has been stated in \cite{KT2023} and basically stems from Proposition~\ref{Prop:firstCard}.
\begin{lem}[Recurrence relation for $D_n(q)$~\cite{KT2023}]\label{lem:Recurrence_Dn}
The generating function $D_{n}(q)$ satisfies the following recurrence:
\begin{equation}
D_n(q)=q D_{n-1}(q) + q^{n} + \sum_{j=0}^{n-2}F_{n-1-j,j}(q), \quad n \ge 1, \qquad D_{0}(q)=1,
\label{eq:Start}
\end{equation}
where the auxiliary function $F_{m_{1},m_{2}}(q)$ is for $m_{1} \ge m_{2} \ge 0$ defined recursively by 
\[
F_{m_{1},m_{2}}(q)=q F_{m_{1}-1,m_{2}}(q)+F_{m_{1},m_{2}-1}(q),
\]
with initial values $F_{m_{1},0}(q)=q^{m_{1}}$, and for $m_{2} > m_{1} \ge 0$ by the symmetry relation
\begin{equation*}
  F_{m_{1},m_{2}}(q)=F_{m_{2},m_{1}}(q).
\end{equation*} 
\end{lem}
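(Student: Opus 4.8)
The plan is to prove the recurrence by working directly with the word model underlying $\mathcal{D}_n$: since $D_n(q) = \sum_{w \in \mathcal{W}_n} q^{\text{$\#$ correct guesses for }w}$, I would decompose the family $\mathcal{W}_n = \{a,b\}^n$ according to the first letter of the word, which under the optimal strategy amounts to conditioning on the value of the first drawn card. Recall that replacing the $a$'s by $1, \dots, k$ and the $b$'s by $k+1, \dots, n$ (with $k$ the number of $a$'s) makes the top card equal to $1$ precisely when the word either starts with $a$ or equals $b^n$, and otherwise the top card is $k+1 \ge 2$. The guesser opens by guessing $1$, so the three regimes (first letter $a$; the single word $b^n$; first letter $b$ with at least one $a$) will produce the three terms on the right-hand side.

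For the words beginning with $a$ I would argue that deleting this leading $a$ is a bijection onto $\mathcal{W}_{n-1}$, and that the permutation generated by the shortened word, after relabelling $2, \dots, n$ as $1, \dots, n-1$, is exactly the corresponding element of $\mathcal{D}_{n-1}$. Because a correct opening guess contributes one factor $q$ and, crucially, the optimal strategy on the remaining cards (continue guessing $2, 3, \dots$ and switch to proportional guessing upon the first error) coincides with the optimal strategy on the relabelled $(n-1)$-deck, the number of correct guesses decreases by exactly one. Summing over all such words yields $q\,D_{n-1}(q)$. The word $b^n$ generates $\text{id}_n$, for which all $n$ guesses are correct, contributing the isolated term $q^{n}$. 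One checks for consistency that the remaining $n$ words $a^{k}b^{n-k}$, $k \ge 1$, which also generate the identity, are already accounted for inside $q\,D_{n-1}(q)$.

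It remains to handle the words starting with $b$ with exactly $k$ $a$'s, $1 \le k \le n-1$. Here the opening guess is wrong, the top card is $m = k+1$, and the cut is revealed to be at $m-1$, leaving the two increasing subsequences $1, \dots, k$ and $k+2, \dots, n$ of lengths $k$ and $n-1-k$. I would observe that, after the forced leading $b$, the remaining $n-1$ letters range over all $\binom{n-1}{k}$ arrangements of $k$ $a$'s and $n-1-k$ $b$'s, and that these are in bijection with the interleavings of the two subsequences, each occurring once. Hence the contribution of this block is precisely the two-color guessing generating function for lengths $k$ and $n-1-k$. I would then identify this generating function with $F_{k,n-1-k}(q)$ by verifying that the two-color count satisfies the stated recursion: splitting an interleaving by its first card gives a correct guess (factor $q$) when it comes from the majority subsequence and an incorrect guess otherwise, which reproduces $F_{m_1,m_2} = q\,F_{m_1-1,m_2} + F_{m_1,m_2-1}$ for $m_1 \ge m_2$, together with $F_{m_1,0} = q^{m_1}$ and the color-swap symmetry. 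Summing over $k$ and reindexing via $j = n-1-k$ turns $\sum_{k=1}^{n-1} F_{k,n-1-k}(q)$ into $\sum_{j=0}^{n-2} F_{n-1-j,j}(q)$, completing the recurrence; the base case $D_0(q)=1$ is immediate.

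The main obstacle I anticipate is the bookkeeping that ties the abstract recursive definition of $F$ to the two-color guessing process and makes the proportional strategy line up with the majority rule underlying $F_{m_1,m_2}$ for $m_1 \ge m_2$ (including the tie case $m_1 = m_2$, where either choice is optimal). Care is also needed to confirm that conditioning on the first card preserves the uniform distribution over the interleavings of the two revealed subsequences, and that the relabelling argument genuinely preserves the count of correct guesses for each word rather than merely the law of the resulting permutation.
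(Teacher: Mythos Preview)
Your proposal is correct and follows essentially the same approach as the paper: both argue via the word model $\mathcal{W}_n$, split on the first letter ($w_1=a$ giving $qD_{n-1}(q)$ by the deletion/relabelling bijection, the single word $b^n$ giving $q^n$, and $w_1=b$ with at least one $a$ giving the two-color contribution), and then identify the two-color count with $F_{m_1,m_2}(q)$ by checking the majority-rule recursion, symmetry, and initial values. Your anticipated obstacles (tie-breaking, preservation of uniformity after conditioning, and that relabelling preserves the per-word guess count) are exactly the points the paper handles, so there is no gap.
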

\begin{proof}
  To keep the work self-contained we give a proof of this recurrence, where we use the before-mentioned combinatorial description of once-shuffled decks of $n$ cards $\sigma = (\sigma_{1}, \dots, \sigma_{n}) \in \mathcal{D}_{n}$ by means of length-$n$ words $w=w_{1} \dots w_{n} \in \mathcal{W}_{n}$. We count the number of correct guesses, where we distinguish according to the first letter $w_{1}$. If $w_{1}=a$ then the first drawn card is $1$, $\sigma_{1}=1$, and this card will be predicted correctly by the guesser. The guesser keeps his strategy of guessing for the deck of remaining cards, which is order-isomorphic to a deck of $n-1$ cards generated by the length-$(n-1)$ word $w'=w_{2} \dots w_{n}$; to be more precise, if $\sigma = (1, \sigma_{2}, \dots, \sigma_{n}) \in \mathcal{D}_{n}$ and $\sigma' = (\sigma_{1}', \dots, \sigma_{n-1}') \in \mathcal{D}_{n-1}$ are the labels of the cards in the deck generated by the words $w = aw' \in \mathcal{W}_{n}$ and $w' \in \mathcal{W}_{n-1}$, respectively, then it simply holds $\sigma_{i} = \sigma_{i-1}'+1$, $2 \le i \le n$. Since $w'$ is a random word of length $n-1$ if started with a random word $w$ of length $n$, this yields the summand $q D_{n-1}(q)$ in equation~\eqref{eq:Start}.

If $w_{1}=b$ then we first consider the particular case that $w=b^{n}$, i.e., that the cut of the deck has been at $0$. Since in this case the deck of cards corresponds to the identity permutation $\sigma = \text{id}_{n}$, the guesser will predict all cards correctly using the optimal strategy, which leads to the summand $q^{n}$ in \eqref{eq:Start}. Apart from this particular case, $w_{1}=b$ corresponds to a deck of cards where the first card is $m \ge 2$ and thus will cause a wrong prediction by the guesser; however, due to complete feedback, now the guesser knows that the cut is at $m-1$, or in alternative terms, he knows that the remaining deck is generated from a word $w'=w_{2} \dots w_{n}$ that has $j:=n-m$ $b$'s and $n-1-j=m-1$ $a$'s, with $0 \le j \le n-2$. From this point on the guesser changes the strategy, which again could be formulated in alternative terms by saying that the guesser makes a guess for the next letter in the word, in a way that the guess is $a$ if the number of $a$'s exceeds the number of $b$'s in the remaining subword, that the guess is $b$ in the opposite case, and (in order to keep the outcome deterministic) that the guess is $a$ if there is a draw between the number of $a$'s and $b$'s. More generally, let us assume that the word consists of $m_{1} \ge 0$ $a$'s and $m_{2} \ge 0$ $b$'s and each of these $\binom{m_{1}+m_{2}}{m_{1}}$ words occur with equal probability, then let us define the r.v.\ $\hat{C}_{m_{1},m_{2}}$ counting the number of correct guesses by the before-mentioned strategy as well as the generating function $F_{m_{1},m_{2}}(q) = \binom{m_{1}+m_{2}}{m_{1}} \E(q^{\hat{C}_{m_{1},m_{2}}})$. It can be seen immediately that $\hat{C}_{m_{1},m_{2}}$ and so $F_{m_{1},m_{2}}(q)$ is symmetric in $m_{1}$ and $m_{2}$, and that $F_{m_{1},m_{2}}(q)$ satisfies the recurrence stated in \eqref{eq:Start}. Moreover, these considerations yield the third summand $\sum_{j=0}^{n-2} F_{n-1-j,j}(q)$ in equation~\eqref{eq:Start}.
\end{proof}

When considering the two-color card guessing game (with complete feedback) starting with $m_{1}$ cards of type (color) $a$ and $m_{2}$ cards of type (color) $b$ it apparently corresponds to the guessing game for the letters of a word over the alphabet $\{a,b\}$ consisting of $m_{1}$ $a$'s and $m_{2}$ $b$'s as described in the proof of Lemma~\ref{lem:Recurrence_Dn}. Thus, the r.v.\ $C_{m_{1},m_{2}}$ counting the number of correct guesses when the guesser uses the optimal strategy for maximizing correct guesses, i.e., guessing the color corresponding to the larger number of cards present~\cite{DiaconisGraham1981,KnoPro2001,PK2023,KuPanPro2009}, and the r.v.\ $\hat{C}_{m_{1},m_{2}}$ are equally distributed, $C_{m_{1},m_{2}} \law \hat{C}_{m_{1},m_{2}}$. Consequently, the auxiliary function $F_{m_{1},m_{2}}(q)$ stated in Lemma~\ref{lem:Recurrence_Dn} is the generating function of $C_{m_{1},m_{2}}$:
\begin{equation}\label{eqn:Cm1m2_Fm1m2q_relation}
  F_{m_{1},m_{2}}(q) = \binom{m_{1}+m_{2}}{m_{1}} \cdot \E(q^{C_{m_{1},m_{2}}}).
\end{equation}

\begin{remark}
In most works considering $C_{m_1,m_2}$ it is assumed without loss of generality that $m_1\ge m_2\ge 0$. 
However, we note that by definition of the two-color card guessing game the order of the parameters is not of relevance under
the optimal strategy: $C_{m_1,m_2}=C_{m_2,m_1}$.
\end{remark}

\begin{remark}\label{rem:Urn_LatticPath}
As has been pointed out in \cite{PK2023}, the two-color guessing procedure for the cards of a deck with $m_{1}$ cards of type $a$ (say color red) and $m_{2}$ cards of type $b$ (say color black) can be formulated also by means of the so-called sampling without replacement urn model starting with $m_{1}$ and $m_{2}$ balls of color red and black, respectively, where in each draw a ball is picked at random, the color inspected and then removed, until no more balls are left. Then the urn histories can be described via weighted lattice paths from $(m_1,m_2)$ to the origin with step sets ``left'' $(-1,0)$ and ``down'' $(0,-1)$: at position $(k_{1},k_{2})$, a left-step and a down-step have weights $\frac{k_{1}}{k_{1}+k_{2}}$ and $\frac{k_{2}}{k_{1}+k_{2}}$, respectively, and reflect the draw of a red ball or a black ball, resp., occurring with the corresponding probabilities. Several quantities of interest for card guessing games can be formulated also via parameters of the sample paths of this urn, such as the first hitting of the diagonal or the first hitting of one of the coordinate axis, which is used in a subsequent section.
\end{remark}

Concerning a distributional analysis of $X_{n}$, an important intermediate result is the following distributional equation, 
which we obtain by translating the recurrence relation~\eqref{eq:Start} into a recursion for probability generating functions.
\begin{theorem}[One-time riffle and two-color card guessing]
\label{the:distDecomp}
The random variable $X=X_n$ of correctly guessed cards, starting with a deck of $n$ cards, after a one-time riffle 
satisfies the following decomposition:
\begin{equation}\label{eqn:Xn_DistEqn}
X_n \law I_1\big(X^{\ast}_{n-1}+1\big)
+(1-I_1)\Big(I_2\cdot n +
(1-I_2)\cdot C_{n-1-J_n,J_n}\Big),
\end{equation}
where $I_1\law \Be(0.5)$, $I_2=\law \Be(0.5^{n-1})$, and $C_{m_1,m_2}$ denotes the number of correct guesses in a two-color card guessing game. Additionally, $X^{\ast}_{n-1}$ is an independent copy of $X$ defined on $n-1$ cards. 
Moreover, $J_n\law \Bin^{*}(n-1,p)$ denotes a truncated binomial distribution: 
\[
\P(J_n=j)=\binom{n-1}{j}/(2^{n-1}-1),\quad  0\le j \le n-2.
\]
All random variables $I_1$, $I_2$, $J_n$, as well as $C_{m_1,m_2}$ are mutually independent.
\end{theorem}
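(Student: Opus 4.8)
The plan is to translate the recurrence relation for $D_n(q)$ in Lemma~\ref{lem:Recurrence_Dn} directly into a statement about the probability generating function of $X_n$ and then read off the distributional decomposition term by term. Recall that $D_n(q) = 2^n \E(q^{X_n})$, so I would divide equation~\eqref{eq:Start} by $2^n$ to obtain
\begin{equation*}
\E(q^{X_n}) = \frac{1}{2} \cdot q \, \E(q^{X_{n-1}}) + \frac{1}{2^n} q^n + \frac{1}{2^n} \sum_{j=0}^{n-2} F_{n-1-j,j}(q).
\end{equation*}
Here I have used $q D_{n-1}(q)/2^n = \tfrac{1}{2} \cdot q \cdot 2^{n-1}\E(q^{X_{n-1}})/2^{n}\cdot\,\dots$, more carefully $q D_{n-1}(q) = q\cdot 2^{n-1}\E(q^{X_{n-1}})$, so dividing by $2^n$ gives the factor $\tfrac12$ as claimed. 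The first paragraph of the proof would just establish this rewritten identity.

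Next I would interpret each of the three summands probabilistically as a mixture. The factor $\tfrac12$ in front of $q\,\E(q^{X_{n-1}})$ corresponds to the event $I_1=1$, $I_1 \sim \Be(0.5)$, under which $X_n$ behaves as $X^\ast_{n-1}+1$ (the $+1$ is the guaranteed first correct guess, encoded by the factor $q$, and $X^\ast_{n-1}$ is an independent copy of $X_{n-1}$). On the complementary event $I_1=0$, which carries total probability $\tfrac12$, the remaining two terms must be renormalized: together they contribute $\tfrac{1}{2^n}q^n + \tfrac{1}{2^n}\sum_{j=0}^{n-2}F_{n-1-j,j}(q)$, and since $\sum_{j=0}^{n-2}\binom{n-1}{j} = 2^{n-1}-1$, the total mass of these terms is indeed $\tfrac12$. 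Within this branch I introduce $I_2 \sim \Be(0.5^{n-1})$: the value $I_2=1$ (the pure-identity case $w=b^n$) yields the deterministic value $n$, matching $q^n$, and $I_2=0$ yields the two-color guessing term. I would verify the weights: $\P(I_1=0)\P(I_2=1) = \tfrac12 \cdot 2^{-(n-1)} = 2^{-n}$, which matches the coefficient of $q^n$.

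For the final term I would use the relation~\eqref{eqn:Cm1m2_Fm1m2q_relation}, namely $F_{m_1,m_2}(q) = \binom{m_1+m_2}{m_1}\E(q^{C_{m_1,m_2}})$, to write $\sum_{j=0}^{n-2} F_{n-1-j,j}(q) = \sum_{j=0}^{n-2}\binom{n-1}{j}\E(q^{C_{n-1-j,j}})$. Dividing this by $2^{n-1}-1$ produces exactly $\E\big(q^{C_{n-1-J_n,J_n}}\big)$, where $J_n$ has the truncated binomial law $\P(J_n=j)=\binom{n-1}{j}/(2^{n-1}-1)$ and is independent of $C_{m_1,m_2}$. Multiplying back by $\tfrac12 \cdot (1-2^{-(n-1)})$ recovers the prefactor $\tfrac{1}{2^n}(2^{n-1}-1)/(2^{n-1}-1)$ before the sum, so the weights on the event $\{I_1=0, I_2=0\}$ are consistent. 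Assembling the three mixture components gives precisely the generating-function identity corresponding to~\eqref{eqn:Xn_DistEqn}, and since all the introduced random variables $I_1, I_2, J_n, C_{m_1,m_2}$ enter as independent factors, this is the claimed distributional decomposition.

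I expect the only delicate point to be bookkeeping the normalizing constants correctly so that the probabilities of the three branches sum to one and the mixing distribution of $J_n$ comes out as the stated truncated binomial; the binomial identity $\sum_{j=0}^{n-2}\binom{n-1}{j}=2^{n-1}-1$ (equivalently, removing the single term $\binom{n-1}{n-1}$ from $2^{n-1}$) is what makes the truncation and the Bernoulli parameter $0.5^{n-1}$ fit together. Everything else is a routine identification of generating functions with mixtures of independent random variables, justified by the already-established equality $C_{m_1,m_2} \law \hat C_{m_1,m_2}$ and relation~\eqref{eqn:Cm1m2_Fm1m2q_relation}.
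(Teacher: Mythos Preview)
Your proposal is correct and follows essentially the same route as the paper: divide the recurrence for $D_n(q)$ by $2^n$ to obtain an identity for $\E(q^{X_n})$, use $F_{m_1,m_2}(q)=\binom{m_1+m_2}{m_1}\E(q^{C_{m_1,m_2}})$ to rewrite the sum as $\frac{1}{2}\big(1-2^{-(n-1)}\big)\E\big(q^{C_{n-1-J_n,J_n}}\big)$, and then read off the mixture decomposition. The paper's proof is almost line-for-line the same, with your version being slightly more explicit about checking that the three branch probabilities add to one.
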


\begin{proof}
By definition, the probability generating function of $X_n$ is given as follows:
\[
\E(q^{X_n})=\frac{D_n(q)}{2^n}.
\]
Thus, we get from~\eqref{eq:Start} the equation
\begin{equation}
\E(q^{X_n})=\frac{1}2 \cdot \E(q^{X_{n-1}+1}) +\frac{1}{2^n}\cdot q^{n} + \frac{1}{2^n}\sum_{j=0}^{n-1}F_{n-1-j,j}(q).
\label{eq:Start2}
\end{equation}
As pointed out above, the probability generating function of $C_{m_{1},m_{2}}$ is given via
\begin{equation*}
\E(q^{C_{m_1,m_2}}) = \frac{F_{m_1,m_2}(q)}{\binom{m_1+m_2}{m_1}}.
\end{equation*}
Thus, the last summand in \eqref{eq:Start2} yields the following representation
\begin{align*}
&\frac{1}{2^n}\sum_{i=0}^{n-2}F_{n-1-j,j}(q)
=\frac{1}{2^n}\sum_{i=0}^{n-2}\E(q^{C_{n-1-j,j}})\cdot \binom{n-1}{j} \\
&\quad=\frac{2^{n-1}-1}{2^n}\sum_{i=0}^{n-2}\E(q^{C_{n-1-j,j}})\cdot \frac{\binom{n-1}{j}}{2^{n-1}-1}\\
&\quad=\frac12\Big(1 - \frac1{2^{n-1}}\Big)\sum_{i=0}^{n-2}\E(q^{C_{n-1-j,j}})\P\{J_n=j\}
=\frac12\Big(1 - \frac1{2^{n-1}}\Big)\E(q^{C_{n-1-J_n,J_n}}).
\end{align*}
Translating these expressions for the probability generating functions involved into a distributional equation
leads to the stated result. Note that the fact that $X_{n-1}^{\ast}$ indeed has the same distribution as $X$ defined on a deck of $n-1$ cards follows from equation~\eqref{eq:Start}.

%\smallskip 
%
%For the sake of completeness, we also add a proof of the fact that $X_n$ and $X_{n-1}^{\ast}$ have the same distribution. 
%Of course, this would follow from equation~\ref{eq:Start}, but it is beneficial to have a short and direct argument. 
%Let $E_{n}$ denote the event that a one has been observed as the first card, excluding the case of a cut at position zero. 
%We have to show that the probability $\P_n\{m \mid E_n\}$ of the second card being $m$ when assuming $E_n$, 
%coincides with starting with $n-1$ cards of an ordered deck and performing a cut as well as interleaving:
%\[
%\P_n\{m \mid E_n\}=p_{n-1}(m-1), \quad 2\le m\le n, 
%\]
%where the probabilities $p$ are as given in Proposition~\ref{Prop:firstCard}. 
%First, we note that 
%\[
%\P_n\{m \mid E_n\}=\frac{\P_n\{m \cap E_n\}}{\P_n\{E_n\}}
%=2\P_n\{m \cap E_n\}.
%\]
%Next assume that $m\neq 2$. The second card can only be $m$ if the cut has been at $m-1$ 
%and the first two drawn cards are $1$ and then $m$. 
%Thus, 
%\begin{equation*}
%\begin{split}
%2\P_n\{m \mid E_n\}&= 2\cdot \frac{\binom{n}{m-1}}{2^n}\cdot \binom{n-2}{n-m}\cdot\frac{(m-1)!(n+1-m)!}{n!}\\
%&=\frac{\binom{n-1}{m-2}}{2^{n-1}}=p_{n-1}(m-1).
%\end{split}
%\end{equation*} 
%Finally, assume that $m=2$. Here, the cut may happen at any position $k$, leading to
%\begin{equation*}
%\begin{split}
%2\P_n\{2 \mid E_n\}&= 2\cdot \sum_{k=2}^{n}\frac{\binom{n}{k}}{2^n}\binom{n-2}{n-k}\frac{(n-k)!k!}{n!}
%+ 2\cdot \frac{\binom{n}1}{2^n}\cdot\frac{(n-1)!1!}{n!}\\
%&=\frac12 + \frac1{2^{n-1}}=p_{n-1}(1).
%\end{split}
%\end{equation*} 
\end{proof}

The distributional decomposition together with the properties of the binomial distribution and 
the limit laws of two-color card guessing game allow to obtain a limit law for $X_n$. 
By the classical de~Moivre–Laplace theorem, we can approximate the binomial distribution 
$J_n$ with mean $\frac{n}2$ and standard deviation $\sqrt{n}/2$ by a normal random variable. 
This suggests that we need to study $C_{n-1-j,j}$ for $j=\frac{n}2 + x\sqrt{n}$, as $n$ tends to infinity. 
We recall the limit law for the two-color card guessing game in the required range (see~\cite{PK2023,KuPanPro2009} for a complete discussion of all different limit laws of $C_{m_{1},m_{2}}$ depending on the growth behaviour of $m_{1}$, $m_{2}$; additionally, we also refer to~\cite{DiaconisGraham1981,Zagier1990} for the case $m_1=m_2$).
\begin{theorem}[Limit law for two-color card guessing~\cite{PK2023,KuPanPro2009}]
\label{the:LinExp}
Assume that the numbers $m_1$, $m_2$ satisfy $m_1-m_2\sim \rho \cdot \sqrt{m_1}$, as $m_1\to\infty$, with $\rho>0$. Then, the number of correct guesses $C_{m_1,m_2}$ is asymptotically linear exponentially distributed,
\[
\frac{C_{m_1,m_2}-m_1}{\sqrt{m_1}}\claw \LinExp(\rho,2),
\]
or equivalently by explicitly stating the cumulative distribution function of $\text{LinExp}(\rho,2)$:
\[
\P\{C_{m_1,m_2}\le m_1+\sqrt{m_1}z\}\to 1-e^{-z(\rho+z)}, \quad \text{for $z \ge 0$}.
\]
\end{theorem}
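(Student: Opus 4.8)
The plan is to reduce the statement to an elementary lattice-path computation and then read off the asymptotics by hand. First I would pass to the urn/lattice-path encoding of Remark~\ref{rem:Urn_LatticPath}: a uniformly random arrangement of the $N:=m_1+m_2$ cards is recorded by the signed-difference walk $S_t:=R_t-B_t$, where $R_t,B_t$ are the numbers of red and black cards remaining after $t$ draws; this is a $\pm1$ bridge from $S_0=m_1-m_2=:d$ to $S_N=0$. Since the optimal strategy always guesses the momentary majority colour, the guess for the $t$-th card is wrong exactly when $S_{t-1}\ge0$ but the card is black, or $S_{t-1}<0$ but the card is red. Matching the drawn colours against the sign of $S$ and counting the net contribution over each maximal negative stretch (which is always $+1$) yields the clean identity
\[
C_{m_1,m_2}=m_1+D,\qquad D:=\#\{\text{down-excursions of }S\text{ strictly below }0\}.
\]
I would sanity-check this on small decks; e.g.\ for $m_1=m_2=1$ one gets $D\in\{0,1\}$ equally likely, matching $C_{1,1}\in\{1,2\}$.

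Second, I would determine the exact law of $D$ by reflection. The number of bridge paths from $d$ to $0$ that perform at least one down-excursion is, reflecting the path about level $-1$ after its first passage there, in bijection with the paths from $d$ to $-2$; iterating this (an André-type repeated reflection, equivalently a cycle-lemma argument) matches the paths with at least $k$ down-excursions to the unrestricted bridge paths ending at $-2k$, of which there are $\binom{N}{m_2-k}$. Hence, for $0\le k\le m_2$,
\[
\P(D\ge k)=\frac{\binom{N}{m_2-k}}{\binom{N}{m_2}}=\frac{\fallfak{m_2}{k}}{\auffak{(m_1+1)}{k}}=\prod_{i=0}^{k-1}\frac{m_2-i}{m_1+1+i},
\]
which I would again verify in small cases (e.g.\ $\P(D\ge1)=m_2/(m_1+1)$).

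Third, the limit law drops out by taking logarithms of the product. Writing $d=m_1-m_2\sim\rho\sqrt{m_1}$ and setting $k=\lceil z\sqrt{m_1}\rceil$ for fixed $z\ge0$, a Taylor expansion gives
\[
\log\P(D\ge k)=-\frac1{m_1}\sum_{i=0}^{k-1}\bigl(d+1+2i\bigr)+O\bigl(m_1^{-1/2}\bigr)=-\frac{k(d+k)}{m_1}+o(1)\longrightarrow-z(\rho+z),
\]
so that $\P\{C_{m_1,m_2}\le m_1+z\sqrt{m_1}\}=\P(D<k)\to1-e^{-z(\rho+z)}$, exactly the distribution function of $\LinExp(\rho,2)$ asserted in Theorem~\ref{the:LinExp}; the degenerate case $\rho=0$ recovers the Rayleigh law. (Alternatively one could bypass the exact formula and invoke an invariance principle, identifying the rescaled $D$ with a constant multiple of the local time at $0$ of a Brownian bridge started at $\rho/\sqrt2$, whose law has the linear hazard rate $\rho+2z$; but the reflection route is more self-contained.)

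The two asymptotic steps are routine. The conceptual crux, and the step I would be most careful about, is the reflection identity for $\P(D\ge k)$: counting \emph{excursions} below a level (returns to $0$ from below) is more delicate than counting a first passage, so the bijection with paths ending at $-2k$ must be set up and verified rigorously, either by iterating the single-excursion reflection above or via the cycle lemma. A secondary technical point is to make the logarithmic expansion uniform over the window $k\asymp\sqrt{m_1}$ (controlling the $\sum_i (d+i)^2/m_1^2$ correction) and to absorb the continuity correction when passing from $\P(D\ge k)$ to the distribution function at the real argument $z$; both are standard once the exact formula is established.
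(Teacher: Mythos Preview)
The paper does not prove Theorem~\ref{the:LinExp} at all; it is quoted from \cite{PK2023,KuPanPro2009} as an external input for Lemma~\ref{lem:twoColorBinLimit}. So there is no in-paper argument to compare against, and your proposal is in effect supplying a proof the paper omits.

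Your route is correct and is essentially the classical one. The identity $C_{m_1,m_2}=m_1+D$, with $D$ the number of strictly-negative excursions of the difference walk $S_t=R_t-B_t$, is exactly the representation used in the card-guessing literature; your verification on small decks matches, and a clean proof follows from decomposing the bridge into the initial descent to $0$ and subsequent up/down excursions and checking that every up step contributes a wrong guess except for the $D$ final up steps $-1\to0$ of the down-excursions. The exact tail law $\P(D\ge k)=\binom{N}{m_2-k}/\binom{N}{m_2}=\prod_{i=0}^{k-1}\frac{m_2-i}{m_1+1+i}$ is likewise the standard formula, and once it is in hand your log-product expansion with $k\sim z\sqrt{m_1}$, $d\sim\rho\sqrt{m_1}$ giving $-k(d+k)/m_1\to -z(\rho+z)$ is routine and correct, including the uniformity caveat you already flag.

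Your own warning about the reflection step is justified, and I would sharpen it: the single reflection at the first hit of $-1$ does biject $\{D\ge1\}$ with bridges to $-2$, but iterating it na\"ively does \emph{not} track the excursion count, because after reflecting about $-1$ the remaining down-excursions of the original path become \emph{up}-excursions (above $-2$) of the reflected path, so a second reflection at the first hit of $-3$ would not correspond to ``$D\ge2$''. Two clean fixes are available. First, one may prove instead the equidistribution $|\{D\ge k\}|=|\{\min_t S_t\le -k\}|$ (a Chung--Feller/Sparre~Andersen-type identity for bridges), after which a single reflection at the first hit of $-k$ gives the bijection with bridges to $-2k$. Second, and closest to your sketch, note that $D$ equals the number of upcrossing steps $-1\to0$; flipping a suitable set of $k$ of these up steps to down steps (for instance, choose them so that after each flip the next marked step is still an upcrossing of the shifted level) gives a bijection with bridges to $-2k$ --- this is precisely a cycle-lemma argument. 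Either way the formula is classical (it appears already in the Diaconis--Graham analysis), so once you replace the informal ``iterate the reflection'' by one of these, the proof is complete.
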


In order to derive a limit law for $X_n$ we require first a limit law for $C_{n-1-J_n,J_n}$ as occurring in Theorem~\ref{the:distDecomp}.
\begin{lem}
\label{lem:twoColorBinLimit}
The random variable $C_{n-1-J_n,J_n}$, with $J_{n}$ as defined in Theorem~\ref{the:distDecomp}, satisfies the following limit law:
\[
\frac{C_{n-1-J_n,J_n}-\frac{n}2}{\sqrt{n}}\to G,
\]
where $G$ denotes a generalized gamma distributed random variable with 
probability density function
\[
f(x)=\sqrt{\frac{2}{\pi}}\cdot 8x^2e^{-2x^2},\quad x\ge 0.
\]
\end{lem}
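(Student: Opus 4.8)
The plan is to combine the de Moivre--Laplace approximation for the (truncated) binomial $J_n$ with the linear-exponential limit law of Theorem~\ref{the:LinExp}, treating $\frac{C_{n-1-J_n,J_n}-n/2}{\sqrt n}$ as a mixture in which $J_n$ plays the role of the mixing variable and the conditional law of the rescaled two-color count is governed by $\LinExp$. Once the mixture is identified, I would pin down the limiting distribution by computing its cumulative distribution function explicitly and differentiating.

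First I would note that the truncation defining $J_n$ (removing the atom at $j=n-1$, of probability $2^{-(n-1)}\to 0$) is asymptotically negligible, so that $Z_n := \big(J_n-\tfrac{n-1}{2}\big)\big/\big(\tfrac{\sqrt{n-1}}{2}\big)$ converges weakly to $Z\sim\mathcal{N}(0,1)$. Setting $m_1=n-1-J_n$, $m_2=J_n$ and using the symmetry $C_{m_1,m_2}=C_{m_2,m_1}$, I write $M=\max(m_1,m_2)$ and $m=\min(m_1,m_2)$. Then
\[
M-m=|m_1-m_2|=|n-1-2J_n|=\sqrt{n-1}\,|Z_n|,\qquad M=\tfrac{n-1}{2}+\tfrac{\sqrt{n-1}}{2}\,|Z_n|\sim\tfrac n2,
\]
so that conditionally on $Z_n=z$ the hypotheses of Theorem~\ref{the:LinExp} are met with $\rho=(M-m)/\sqrt M\to\sqrt2\,|z|$.

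Applying Theorem~\ref{the:LinExp} conditionally, I would decompose
\[
\frac{C_{n-1-J_n,J_n}-\tfrac n2}{\sqrt n}=\frac{C_{M,m}-M}{\sqrt M}\cdot\frac{\sqrt M}{\sqrt n}+\frac{M-\tfrac n2}{\sqrt n},
\]
where $\sqrt M/\sqrt n\to 1/\sqrt2$ and $\big(M-\tfrac n2\big)/\sqrt n\to |Z|/2$, while the first factor converges conditionally to $W\sim\LinExp(\sqrt2\,|Z|,2)$, with conditional cumulative distribution function $1-e^{-w(\sqrt2\,v+w)}$, $w\ge0$, given $|Z|=v$. This identifies the weak limit as $G=\tfrac{1}{\sqrt2}W+\tfrac{|Z|}{2}$, a mixture of $\LinExp$ laws over the half-normal variable $|Z|$ with density $\sqrt{2/\pi}\,e^{-v^2/2}$. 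To finish, for $x\ge0$ the constraint $w=\sqrt2(x-v/2)\ge0$ forces $v\le 2x$, and the identity $w(\sqrt2\,v+w)=2x^2-v^2/2$ gives
\[
\P(G\le x)=\sqrt{\tfrac2\pi}\int_0^{2x}\big(1-e^{-2x^2+v^2/2}\big)e^{-v^2/2}\,dv=\sqrt{\tfrac2\pi}\Big(\int_0^{2x}e^{-v^2/2}\,dv-2x\,e^{-2x^2}\Big),
\]
and differentiating in $x$ yields exactly $f(x)=\sqrt{2/\pi}\,8x^2e^{-2x^2}$.

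The main obstacle I anticipate is the rigorous justification of the conditional (mixing) step, namely passing to the limit inside the average over $J_n$ so that the de Moivre--Laplace convergence of $J_n$ combines with the conditional $\LinExp$ convergence of $C_{M,m}$ to produce the claimed mixture. This requires uniformity of the convergence in Theorem~\ref{the:LinExp} over the relevant window of indices $j$ (those corresponding to $z$ in compact sets), together with a tail estimate showing the contribution of $j$ far from $\tfrac{n-1}{2}$ is negligible. The degenerate boundary case $\rho\to0$ (i.e.\ $m_1=m_2$) is harmless, since $\{Z=0\}$ has probability zero and $\LinExp(\rho,2)$ depends continuously on $\rho$ down to $\rho=0$, where it degenerates to a Rayleigh law.
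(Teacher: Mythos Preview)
Your proposal is correct and follows essentially the same approach as the paper: both condition on $J_n$, invoke de~Moivre--Laplace to replace the (truncated) binomial by a normal, apply Theorem~\ref{the:LinExp} with $\rho=\sqrt{2}\,|z|$ (the paper's $t$ is your $v=|z|$), obtain the conditional limit $1-e^{-2x^{2}+v^{2}/2}$ on the range $0\le v\le 2x$, and arrive at the very same integral for the limiting distribution function before differentiating. The only cosmetic difference is that you phrase the argument as identifying the limit random variable $G=\tfrac{1}{\sqrt2}W+\tfrac{|Z|}{2}$ via a decomposition of $\big(C_{n-1-J_n,J_n}-\tfrac n2\big)/\sqrt n$, whereas the paper works directly with the cumulative distribution function throughout; your honest remark about the need for uniformity in Theorem~\ref{the:LinExp} to justify the mixing step applies equally to the paper's use of~$\sim$.
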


\begin{figure}[!htb]
\begin{tikzpicture}
\begin{axis}[
 domain=0:4,samples=100,%smooth,  enlargelimits=upper, 
  axis lines*=left,
  height=6cm, width=11cm,
  xtick={0,0.5, 1,1.5, 2,2.5, 3,3.5, 4}, 
	 ytick={0,0.1, 0.2, 0.3, 0.4, 0.5, 0.6, 0.7,0.8,0.9,1,1.1,1.2}, 
	ymin=0, ymax=1.2,
	xmin=0, xmax=4,
   axis on top,  clip=false, no markers,
	 xlabel={$x$},
   ylabel={$f(x)$},
  ]
%\addplot[very thick,cyan!50!black]{gauss(2,1)};
 \addplot [very thick, blue] {sqrt(2/pi)*8*x^2*exp(-2*x^2)};
\end{axis}
\end{tikzpicture}
\caption{Plot of the density function $f(x)$ of the generalized Gamma distribution occurring in Theorem~\ref{the:Yn_LimitLaw} and Lemma~\ref{lem:twoColorBinLimit}.}
\label{fig:GammaDensity}%
\end{figure}
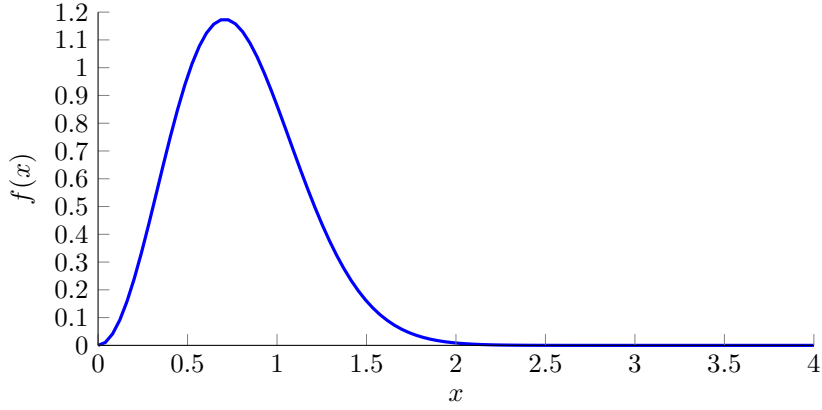
\begin{remark}
This special instance of a generalized Gamma distribution is also known as a Maxwell-Boltzmann distribution with parameter $a=1/2$,
which is of important for describing particle speeds in idealized gases.

\smallskip 

The first three raw integer moments of $G$ are 
\[
\E(G)=\mu_G=\sqrt{\frac{2}\pi}\approx 0.7979,\quad
\E(G^2)=\frac34,\quad
\E(G^3)=\sqrt{\frac{2}\pi}.
\]
Consequently, the standard deviation $\sigma_G$ and the skewness $\gamma_G$ are given by
\[
\sigma_G = \sqrt{\E(G^{2})-\mu_{G}^{2}} \approx 0.3367,\quad \gamma_G=\frac{\E(G^{3})-3\mu_{G} \E(G^{2}) + 2 \mu_{G}^{3}}{\sigma_{G}^{3}} \approx 0.4857,
%= \frac{\sqrt{\frac2\pi}\Big(\frac4\pi-\frac54\Big)}{\sigma_G^3}
\]
leading to a right-skewed distribution, in agreement with the numerical observations of the limit law of $X_{n}$ (which turns out to be $G$ as well) in~\cite{KT2023}. See Figure~\ref{fig:GammaDensity} for a plot of the density function of $G$.
\end{remark}

\begin{proof}
We consider the distribution function 
\[
F_n(x)=\P\big\{C_{n-1-J_n,J_n}\le \frac{n}2 + x\sqrt{n}\big\}
\]
for fixed positive real $x$. Conditioning on the truncated binomial distribution gives
\[
F_n(x) =\sum_{j=0}^{n-2}\P\big\{C_{n-1-j,j}\le \frac{n}2 + x\sqrt{n}\big\}\P\{J_n=j\}.
\]
We can exploit the symmetry of the binomial distribution, as well as $C_{m_1,m_2}$, 
to get
\[
F_n(x)\sim 2\cdot \sum_{j=\lfloor n/2\rfloor}^{n-2}\P\big\{C_{j,n-1-j}\le \frac{n}2 + x\sqrt{n}\big\} \cdot \P\{J_n=j\}.
\]
By the de~Moivre-Laplace limit theorem for the binomial distribution we get for large $n$
\[
F_n(x)\sim 2\int_{n/2}^{n-2}\P\big\{C_{j,n-1-j}\le \frac{n}2 + x\sqrt{n}\big\} \cdot \frac{e^{-\frac{(j-\mu_{n})^2}{2\sigma_{n}^2}}}{\sigma_{n}\sqrt{2\pi}}dj,
\]
where $\mu_{n}=n/2$ and $\sigma_{n}=\sqrt{n}/2$. Substituting $j=\mu_{n}+t\sigma_{n}$, we obtain further
\[
F_n(x)\sim 2\int_{0}^{\infty}\P\big\{C_{n/2 + t\sqrt{n}/2,n-1-n/2-t\sqrt{n}/2}\le \frac{n}2 + x\sqrt{n}\big\} \cdot \frac{e^{-\frac{t^2}{2}}}{\sqrt{2\pi}} dt.
\]
Next, we asymptotically evaluate the integrand by using the limit law from Theorem~\ref{the:LinExp} for the two-color card guessing game with 
\[
m_1=n/2 + t\sqrt{n}/2,\quad m_2=n-1-n/2-t\sqrt{n}/2.
\]
Since $C_{m_{1},m_{2}} \ge \max\{m_{1},m_{2}\}$ (see, e.g., \cite{PK2023}), we deduce that for $t>2x$ it holds
\[
\P\big\{C_{n/2 + t\sqrt{n}/2,n-1-n/2-t\sqrt{n}/2}\le \frac{n}2 + x\sqrt{n}\big\}\sim 0.
\]
Furthermore, in the range $0\le t\le 2x$ we obtain from Theorem~\ref{the:LinExp}, by setting $\rho = \sqrt{2} t$ and $z=\sqrt{2}(x-t/2)$,
\begin{multline*}
\P\big\{C_{n/2 + t\sqrt{n}/2,n-1-n/2-t\sqrt{n}/2}\le \frac{n}2 + x\sqrt{n}\big\} \\
\to 1-\exp\Big(-\sqrt{2}\big(x-\frac{t}2\big)\big(\sqrt{2} t + \sqrt{2}(x-\frac{t}{2})\big)\Big) = 1-\exp\Big(-2x^2+\frac{t^2}2\Big).
\end{multline*}
This implies that
\begin{align*}
F_n(x)&\sim \frac{2}{\sqrt{2\pi}}\cdot \int_0^{2x}e^{-t^2/2}\Big(1-\exp\Big(-2x^2+\frac{t^2}2\Big)\Big)dt\\
& =\frac{2}{\sqrt{2\pi}}\cdot \int_0^{2x}\Big(e^{-t^2/2}-e^{-2x^2}\Big)dt 
= \frac{2}{\sqrt{2\pi}}\cdot \Big( \int_0^{2x}e^{-t^2/2}dt -\frac{2x}{\sqrt{2\pi}}e^{-2x^2}\Big).
\end{align*}
Differentiating the last expression with respect to $x$ leads to the desired density function of the limiting r.v.\ $G$,
\[
f(x)=\frac{2}{\sqrt{2\pi}}\Big(e^{-2x^2}\cdot 2 - 2e^{-2x^2} + 8x^2\cdot e^{-2x^2}\Big)
=\sqrt{\frac{2}{\pi}}\cdot 8x^2e^{-2x^2}.
\]
\end{proof}

Next we state the main result of this work, a limit law for the number of correct guesses $X_n$. 
The limit law is the same as in Lemma~\ref{lem:twoColorBinLimit}, involving the generalized Gamma distribution.

\begin{theorem}\label{the:Yn_LimitLaw}
The normalized random variable $Y_n=(X_n-\frac{n}2)/\sqrt{n}$ converges in distribution
to a generalized gamma distributed random variable $G$, $Y_n\claw G$, with density 
$f(x)=\sqrt{\frac{2}{\pi}}\cdot 8x^2e^{-2x^2}$, $x\ge 0$.
\end{theorem}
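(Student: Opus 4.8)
The plan is to prove Theorem~\ref{the:Yn_LimitLaw} by passing from the distributional decomposition in Theorem~\ref{the:distDecomp} to a recursion for the distribution functions of $Y_n$, and to show that the dominant contribution comes from the two-color term $C_{n-1-J_n,J_n}$, whose limit law was already computed in Lemma~\ref{lem:twoColorBinLimit}. Concretely, I would write
\[
\P\{Y_n \le x\} = \P\Big\{X_n \le \tfrac{n}{2} + x\sqrt{n}\Big\}
\]
and apply the decomposition~\eqref{eqn:Xn_DistEqn}, splitting according to the indicator variables $I_1$ and $I_2$. This gives three terms, weighted by $\P\{I_1=1\}=\tfrac12$, by $\P\{I_1=0,I_2=1\}=\tfrac12\cdot 2^{-(n-1)}$, and by $\P\{I_1=0,I_2=0\}=\tfrac12(1-2^{-(n-1)})$ respectively.

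First I would dispose of the two exponentially small or shifting terms. The middle term corresponds to the event $X_n=n$ (the whole deck guessed correctly) and carries probability $\tfrac12\cdot 2^{-(n-1)}\to 0$, so it is negligible. The first term contributes $\tfrac12\,\P\{X^{\ast}_{n-1}+1 \le \tfrac{n}{2}+x\sqrt n\}$; writing $X^\ast_{n-1}$ in its own centered-and-scaled form $Y_{n-1}=(X^\ast_{n-1}-\tfrac{n-1}{2})/\sqrt{n-1}$ and matching the normalizations shows this term behaves like $\tfrac12\,\P\{Y_{n-1}\le x\sqrt{n/(n-1)}-c/\sqrt{n-1}\}$ for an explicit $O(1/\sqrt n)$ shift, i.e.\ asymptotically $\tfrac12\,\P\{Y_{n-1}\le x\}$. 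The last term, by Lemma~\ref{lem:twoColorBinLimit} together with the matching of the threshold $\tfrac{n}{2}+x\sqrt n$ against the $\tfrac{n}{2}+\sqrt{n}\,\cdot$ scaling there, converges to $\tfrac12\,F_G(x)$, where $F_G$ is the distribution function of $G$.

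Assembling these estimates yields, with $F_n(x):=\P\{Y_n\le x\}$, the asymptotic recursion
\[
F_n(x) = \tfrac12\,F_{n-1}(x) + \tfrac12\,F_G(x) + o(1), \qquad n \to \infty,
\]
uniformly on compact $x$-intervals (or at every continuity point of $F_G$, which is all of $\R$ since $G$ has a continuous density). The conclusion $F_n(x)\to F_G(x)$ then follows by a standard contraction/averaging argument: subtracting the fixed point $F_G(x)=\tfrac12 F_G(x)+\tfrac12 F_G(x)$ gives $\,F_n(x)-F_G(x)=\tfrac12\big(F_{n-1}(x)-F_G(x)\big)+o(1)$, and iterating this linear recursion with contraction factor $\tfrac12$ drives the difference to $0$. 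Because both $F_n$ and $F_G$ are distribution functions (hence uniformly bounded), the error terms accumulate in a convergent geometric series and the limit is $G$, as claimed.

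The main obstacle I anticipate is making the asymptotic recursion rigorous and uniform, rather than merely pointwise. Two issues need care: first, the threshold in the first term, $\tfrac{n}{2}+x\sqrt n$, must be correctly re-expressed relative to the $(n-1)$-card normalization, and one must verify that the resulting $O(1/\sqrt n)$ shift in the argument of $F_{n-1}$ does not spoil convergence---this uses continuity (indeed Lipschitz-boundedness via the bounded density) of the limit $F_G$. Second, the $o(1)$ terms depend on $x$, so to conclude convergence in distribution one should either argue at each fixed continuity point of $F_G$ (sufficient for weak convergence by the Portmanteau theorem) or establish local uniformity; I expect the cleanest route is to fix $x$, treat the $o(1)$ as $x$-dependent error, and invoke the iterated contraction at that single point, since weak convergence only requires pointwise convergence of distribution functions at continuity points of the limit.
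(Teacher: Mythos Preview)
Your proposal is correct and follows essentially the same approach as the paper. The paper also starts from the distributional decomposition of Theorem~\ref{the:distDecomp}, drops the exponentially small $I_2$-term, and then iterates the resulting recursion; it writes the iterate explicitly as $\P\{X_n\le \tfrac{n}{2}+x\sqrt{n}\}\sim \sum_{\ell\ge 1}2^{-\ell}\,\P\{C_{n-\ell-J_{n-\ell},J_{n-\ell}}\le \tfrac{n}{2}+x\sqrt{n}\}$ and then applies Lemma~\ref{lem:twoColorBinLimit} to each summand, whereas you phrase the same iteration as the contraction $F_n-F_G=\tfrac12(F_{n-1}-F_G)+o(1)$---these are two presentations of the same geometric-averaging mechanism.
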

\begin{remark}[A fixed-point equation]
Once we know that the limit law exists, one can informally derive the limit law from the distributional equation~\eqref{eqn:Xn_DistEqn} by omitting asymptotically negligible terms:
\[
Y_n \sim I_1 \cdot Y_{n-1} + (1-I_1)\frac{C_{n-1-J_n,J_n}-\frac{n}2}{\sqrt n},
\]
where $I_1=\Be(0.5)$. Thus, for large $n$ we anticipate a sort of fixed-point equation for the limit law $Y$ of $Y_n$:
\[
Y \sim I_1 \cdot Y + (1-I_1)\cdot G, 
\]
with $G$ the generalized Gamma limit law. Similarly, we may anticipate that all integer moments of $Y$ are simply the moments of $G$:
\[
\E(Y^{r}) = \frac12 \E(Y^{r}) + \frac12 \E(G^{r}), \quad \text{and further} \quad \E(Y^{r})=\E(G^{r}), \quad r \ge 0.
\]
\end{remark}
\begin{proof}
According to Theorem~\ref{the:distDecomp} we get
\begin{align*}
& \P\big\{X_n\le \frac{n}2+x\sqrt{n}\big\}\\
& \quad = \frac{1}{2} \P\big\{X_{n-1}+1\le \frac{n}2+x\sqrt{n}\big\}
 + \Big(\frac{1}{2}-\frac{1}{2^{n}}\Big)\P\big\{C_{n-1-J_n,J_n}\le \frac{n}2 + x\sqrt{n}\big\}.
\end{align*}
Moreover, by iterating this recursive representation we observe that, for $n\to\infty$,
\[
\P\big\{X_n\le \frac{n}2+x\sqrt{n}\big\}
\sim \sum_{\ell \ge 1}\frac{1}{2^\ell} \cdot
\P\big\{C_{n-\ell-J_{n-\ell},J_{n-\ell}}\le \frac{n}2 + x\sqrt{n}\big\}.
\]
As $n$ tends to infinity, Lemma~\ref{lem:twoColorBinLimit} ensures that all the distribution functions occurring converge to the same limit, from which the stated result follows.
\end{proof}

\subsection{Moment convergence}
Krityakierne and Thanatipanonda~\cite{KT2023} provided extremely precise results for the first few (factorial) moments of $X_n$, 
as well as for the centered moments $\E((X_n-\mu)^{r})$, for $r=1,2,3$. We state a simplified version of their result:
\begin{gather}
  \mu=\E(X_n)=\frac{n}{2}+\sqrt{\frac{2n}{\pi}} - \frac12 +\mathcal{O}(n^{-1/2}), \quad \E\big((X_n-\mu)^2\big)=\Big(\frac34-\frac2\pi\Big)n+\mathcal{O}(1),\notag\\
	\E\big((X_n-\mu)^3\big)=\sqrt{\frac2\pi}\Big(\frac4\pi-\frac54\Big)n^{3/2} + \mathcal{O}(n^{1/2}).\label{eqn:mom1}
\end{gather}

First we use above expansions of $\E((X_n-\mu)^{r})$ to determine the asymptotics of the first moments of $Y_{n}=(X_n-\frac{n}2)/\sqrt{n}$ in a straightforward way. One observes that the limits of $\E\big(Y_{n}^{r}\big)$, $r=1,2,3$, are in agreement with the limit law $G$ stated in Theorem~\ref{the:Yn_LimitLaw}.  
\begin{prop}
Let $Y_n=(X_n-\frac{n}2)/\sqrt{n}$. The moments $\E(Y_{n}^{r})$ converge for $r=1,2,3$ to the moments of the limit law $G$:
\[
\E\big(Y_n\big)\to\sqrt{\frac{2}\pi}=\E(G),\quad
\E\big(Y_n^2\big) \to \frac34=\E(G^2),\quad
\E\big(Y_n^{3}\big)\to \sqrt{\frac{2}\pi}=\E(G^3).
\]
\end{prop}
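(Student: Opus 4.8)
The plan is to reduce everything to the centered-moment asymptotics already recorded in~\eqref{eqn:mom1} by means of a deterministic shift. First I would write
\[
Y_n = Z_n + \delta_n, \qquad Z_n := \frac{X_n-\mu}{\sqrt{n}}, \qquad \delta_n := \frac{\mu - \frac{n}{2}}{\sqrt{n}},
\]
so that $Z_n$ is the normalized \emph{centered} variable and $\delta_n$ is a purely deterministic term. From the expansion of $\mu=\E(X_n)$ in~\eqref{eqn:mom1} one reads off $\delta_n = \sqrt{2/\pi} - \tfrac{1}{2\sqrt{n}} + \mathcal{O}(n^{-1})$, hence $\delta_n \to \sqrt{2/\pi}$. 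The decisive feature of this decomposition is that $\E(Z_n)=0$ by construction, which will annihilate several cross terms in the binomial expansions below.

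Next, dividing the second and third displayed moments in~\eqref{eqn:mom1} by $n$ and $n^{3/2}$, respectively, gives directly the limiting centered moments
\[
\E(Z_n) = 0, \qquad \E(Z_n^2) \to \frac34 - \frac2\pi, \qquad \E(Z_n^3) \to \sqrt{\frac2\pi}\Big(\frac4\pi - \frac54\Big),
\]
where the $\mathcal{O}$-error terms are of lower order in $n$ and thus vanish in the limit.

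I would then expand $Y_n^r = (Z_n+\delta_n)^r$ by the binomial theorem, take expectations, and invoke $\E(Z_n)=0$ together with $\delta_n\to\sqrt{2/\pi}$. For $r=1$ this is immediate: $\E(Y_n)=\delta_n\to\sqrt{2/\pi}$. For $r=2$ the cross term drops, so $\E(Y_n^2)=\E(Z_n^2)+\delta_n^2\to(\tfrac34-\tfrac2\pi)+\tfrac2\pi=\tfrac34$. For $r=3$ the term carrying $\E(Z_n)$ again vanishes, leaving
\[
\E(Y_n^3) = \E(Z_n^3) + 3\delta_n\,\E(Z_n^2) + \delta_n^3.
\]

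The only genuine piece of work is checking that the $r=3$ limit collapses to $\sqrt{2/\pi}$. Writing $a=\sqrt{2/\pi}$ so that $a^3=\tfrac2\pi\,a$, the limit equals $a(\tfrac4\pi-\tfrac54)+3a(\tfrac34-\tfrac2\pi)+\tfrac2\pi a = a(1-\tfrac2\pi)+\tfrac2\pi a = a$, i.e.\ exactly $\E(G^3)=\sqrt{2/\pi}$. Thus the main (and essentially only) obstacle is this arithmetic cancellation in the third moment; there is no analytic subtlety, since all error terms are $o$ of the leading orders and $\delta_n$ converges. It is worth stressing that this argument uses no independent knowledge of the law $G$: the agreement of $\lim\E(Y_n^r)$ with $\E(G^r)$ follows purely from~\eqref{eqn:mom1} and conversely serves as a consistency check on the limit law asserted in Theorem~\ref{the:Yn_LimitLaw}.
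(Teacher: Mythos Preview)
Your proof is correct and follows essentially the same route as the paper: both arguments shift $X_n-\tfrac{n}{2}$ by the deterministic quantity $\mu-\tfrac{n}{2}$ and expand binomially, using $\E(X_n-\mu)=0$ to kill the cross terms before invoking the centered-moment asymptotics~\eqref{eqn:mom1}. The only cosmetic difference is that you normalize by $\sqrt{n}$ up front (so your $\delta_n$ is the paper's $\delta_n/\sqrt{n}$), and you spell out the $r=3$ arithmetic explicitly whereas the paper leaves it to the reader.
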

\begin{proof}
The result for the expected value $\E(Y_n)$ follows directly from~\eqref{eqn:mom1}.
%Next, we start with a simplified version of the results~\cite{KT2023}:
%\begin{align}
%\label{eqn:centMom2}
%\E\big((X_n-\mu)^2\big)&=\Big(\frac34-\frac2\pi\Big)n+\mathcal{O}(1).
%\end{align}
In the following let $\mu = \E(X_{n})= \frac{n}{2} + \delta_{n}$. Due to \eqref{eqn:mom1} it holds
\begin{equation}\label{eqn:d}
  \delta_{n} = \sqrt{\frac{2n}{\pi}} - \frac12 +\mathcal{O}(n^{-1/2}).
\end{equation}
Consequently, the second centered moment can be rewritten as follows:
\[
\E\big((X_n-\mu)^2\big)=
\E\big((X_n-\frac{n}2 -\delta_{n})^2\big)
=\E\big((X_n-\frac{n}2)^{2}\big)-2 \delta_{n}\E\big(X_n-\frac{n}2\big)+\delta_{n}^{2},
\]
which gives, by using expansions \eqref{eqn:mom1} and \eqref{eqn:d},
\begin{align*}
\E\big(Y_n^2\big)&=
\frac1n\E\big((X_n-\frac{n}2)^{2}\big)
=\frac1n\Big[\E\big((X_n-\mu)^2\big)+2 \delta_{n}\E\big(X_n-\frac{n}{2}\big)-\delta_{n}^{2}\Big]\\
&=\frac1n\Big[\E\big((X_n-\mu)^2\big)+ \delta_{n}^{2}\Big]\sim \frac34.
\end{align*}
In a similar way, by rewriting the third centered moment and using \eqref{eqn:mom1} and \eqref{eqn:d}, one obtains the stated result for $\E(Y_{n}^{3})$.
\end{proof}

Actually, in the following we are going to show that indeed all integer moments of $Y_{n}$ converge to the corresponding moments of the limit law $G$. Let us first state them.
\begin{prop}
  The integer moments of the generalized gamma distributed random variable $G$ with probability density function as defined in Lemma~\ref{lem:twoColorBinLimit} are given as follows:
\begin{equation*}
  \E\big(G^{r}\big) = \frac{\Gamma\big(\frac{r+3}{2}\big)}{2^{\frac{r}{2}-1} \sqrt{\pi}}, \quad r \ge 0.
\end{equation*}
\end{prop}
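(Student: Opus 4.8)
The plan is to compute the moment directly from the density, since $G$ is an explicit absolutely continuous random variable supported on $[0,\infty)$. First I would write
\[
\E(G^{r}) = \int_{0}^{\infty} x^{r} f(x)\, dx = 8\sqrt{\tfrac{2}{\pi}} \int_{0}^{\infty} x^{r+2} e^{-2x^{2}}\, dx,
\]
using the density $f(x)=\sqrt{2/\pi}\cdot 8x^{2}e^{-2x^{2}}$ from Lemma~\ref{lem:twoColorBinLimit}. The whole computation then reduces to evaluating the single Gaussian-type moment integral $\int_{0}^{\infty} x^{s} e^{-ax^{2}}\, dx$ in the case $s=r+2$, $a=2$.

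Next I would reduce this integral to a value of the Gamma function, either by the substitution $u=2x^{2}$ or directly by the standard identity $\int_{0}^{\infty} x^{s} e^{-ax^{2}}\, dx = \tfrac12\, a^{-(s+1)/2}\,\Gamma\!\big(\tfrac{s+1}{2}\big)$, which here yields $\int_{0}^{\infty} x^{r+2} e^{-2x^{2}}\, dx = \tfrac12\, 2^{-(r+3)/2}\,\Gamma\!\big(\tfrac{r+3}{2}\big)$. Substituting this back and collecting the prefactor as $8\sqrt{2/\pi}\cdot\tfrac12 = 2^{5/2}/\sqrt{\pi}$ gives
\[
\E(G^{r}) = \frac{2^{5/2}}{\sqrt{\pi}}\cdot 2^{-(r+3)/2}\,\Gamma\!\Big(\tfrac{r+3}{2}\Big) = \frac{\Gamma\!\big(\tfrac{r+3}{2}\big)}{2^{r/2-1}\sqrt{\pi}},
\]
after simplifying the exponent $\tfrac52 - \tfrac{r+3}{2} = 1 - \tfrac{r}{2}$, which is exactly the asserted formula.

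As a consistency check I would finally verify the boundary and low-order cases: $r=0$ gives $\Gamma(3/2)/(2^{-1}\sqrt{\pi}) = 1$, confirming that $f$ is a bona fide probability density, while $r=1,2,3$ reproduce $\sqrt{2/\pi}$, $3/4$ and $\sqrt{2/\pi}$, in agreement with the moments already recorded in the remark following Lemma~\ref{lem:twoColorBinLimit}. I expect no genuine obstacle here: the only point demanding a little care is the bookkeeping of the powers of $2$ and the $\sqrt{\pi}$ factor, so that the combined prefactor collapses to $2^{1-r/2}/\sqrt{\pi}$; the rest is the routine reduction of a Gaussian moment to the Gamma function.
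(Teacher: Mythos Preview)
Your proof is correct and follows essentially the same route as the paper: both write $\E(G^{r})=8\sqrt{2/\pi}\int_{0}^{\infty}x^{r+2}e^{-2x^{2}}\,dx$ and reduce this Gaussian moment to $\Gamma\big(\tfrac{r+3}{2}\big)$ via the substitution $t=2x^{2}$ (or, equivalently, the standard identity you quote), after which only the bookkeeping of powers of $2$ remains. Your added sanity checks for $r=0,1,2,3$ are a nice bonus but not needed for the argument.
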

\begin{proof}
  A straightforward evaluation of the defining integral of the $r$-th moment of $G$ by means of the $\Gamma$-function after substituting $t=2x^{2}$ yields the stated result:
\begin{align*}
  \E\big(G^{r}\big) & = \int_{0}^{\infty} f(x) x^{r} dx = 8 \sqrt{\frac{2}{\pi}}\cdot \int_{0}^{\infty} x^{r+2} e^{-2x^2} dx
	= \frac{1}{2^{\frac{r}{2}-1} \sqrt{\pi}} \cdot \int_{0}^{\infty} t^{\frac{r+1}{2}} e^{-t} dt \\
	& = \frac{\Gamma\big(\frac{r+3}{2}\big)}{2^{\frac{r}{2}-1} \sqrt{\pi}}.
\end{align*}
\end{proof}

\begin{theorem}\label{the:Yn_Moments}
Let $Y_n=(X_n-\frac{n}2)/\sqrt{n}$. The $r$-th integer moments $\E(Y_{n}^{r})$ converge, for arbitrary but fixed $r$ and $n \to \infty$, to the moments of the limit law $G$:
\begin{equation*}
\E\big(Y_{n}^{r}\big) \to \E\big(G^{r}\big) = \frac{\Gamma\big(\frac{r+3}{2}\big)}{2^{\frac{r}{2}-1} \sqrt{\pi}}, \quad r \ge 0.
\end{equation*}
\end{theorem}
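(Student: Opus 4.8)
The plan is to upgrade the distributional convergence $Y_n \claw G$ from Theorem~\ref{the:Yn_LimitLaw} to convergence of moments by establishing uniform integrability, and to control the required moment bounds through the distributional decomposition~\eqref{eqn:Xn_DistEqn}. First I would record a deterministic lower bound: since $C_{m_1,m_2}\ge\max\{m_1,m_2\}$, a short induction on~\eqref{eqn:Xn_DistEqn} shows $X_n\ge\frac{n-1}{2}$, whence $Y_n\ge-\frac{1}{2\sqrt n}\ge-\frac12$. Thus $Y_n$ is bounded below, and it suffices to prove that for every fixed $s$ the family $\{\E(Y_n^s)\}_n$ is bounded; indeed an $L^1$-bound at level $2r$ makes $\{Y_n^r\}_n$ uniformly integrable, and uniform integrability together with $Y_n\claw G$ yields $\E(Y_n^r)\to\E(G^r)$, which is the assertion once one recalls $\E(G^r)=\Gamma(\frac{r+3}{2})/(2^{r/2-1}\sqrt\pi)$.

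To obtain the moment bounds I would turn~\eqref{eqn:Xn_DistEqn} into a recursion. Writing $a_n^{(r)}:=\E\big((X_n-\tfrac n2)^r\big)$ and using $I_1(1-I_1)=0$, $I_2(1-I_2)=0$, the mutual independence of the factors, and $X^\ast_{n-1}+1-\tfrac n2=(X^\ast_{n-1}-\tfrac{n-1}{2})+\tfrac12$, one gets
\[
a_n^{(r)}=\frac12\sum_{k=0}^{r}\binom{r}{k}2^{-(r-k)}a_{n-1}^{(k)}+\frac{(n/2)^r}{2^n}+\frac12\Big(1-\frac{1}{2^{n-1}}\Big)\,b_n^{(r)},
\]
where $b_n^{(r)}:=\E\big((C_{n-1-J_n,J_n}-\tfrac n2)^r\big)$. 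Isolating the $k=r$ term yields $a_n^{(r)}=\tfrac12 a_{n-1}^{(r)}+c_n^{(r)}$, and iterating gives $a_n^{(r)}=\sum_{\ell\ge0}2^{-\ell}c_{n-\ell}^{(r)}+O(2^{-n})$, in which the dominant contribution is $\tfrac12\sum_{\ell\ge0}2^{-\ell}b_{n-\ell}^{(r)}$.

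The crux is therefore a moment version of Lemma~\ref{lem:twoColorBinLimit}, namely $b_n^{(r)}=n^{r/2}\E(G^r)\,(1+o(1))$, together with a uniform bound $b_n^{(r)}=O(n^{r/2})$. For this I would condition on $J_n=j$ and write $C_{n-1-j,j}=\max\{n-1-j,j\}+R_{n-1-j,j}$ with $R\ge0$, invoke uniform moment estimates for the two-color game from~\cite{PK2023,KuPanPro2009} (the linear-exponential limit of Theorem~\ref{the:LinExp} has all moments and the remainder $R$ has uniformly sub-exponential tails on the scale $\sqrt{\min\{m_1,m_2\}}$), and integrate against the truncated binomial weights $\P(J_n=j)$. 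Near the balanced regime $j\approx\frac{n-1}{2}$ this reproduces the Maxwell--Boltzmann moments exactly as in Lemma~\ref{lem:twoColorBinLimit}, while the unbalanced tail, where $\max\{n-1-j,j\}-\tfrac n2$ grows, is suppressed by the Gaussian decay of the binomial coefficients.

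Granting $b_n^{(r)}=O(n^{r/2})$, an induction on $r$ closes the argument: the lower-order terms in the recursion, controlled by $a_{n-1}^{(k)}=O(n^{k/2})$ for $k<r$, contribute only $O(n^{(r-1)/2})$, so $a_n^{(r)}=\tfrac12\sum_{\ell\ge0}2^{-\ell}b_{n-\ell}^{(r)}+O(n^{(r-1)/2})$. Since $b_{n-\ell}^{(r)}\sim n^{r/2}\E(G^r)$ and $\tfrac12\sum_{\ell\ge0}2^{-\ell}=1$, this gives $a_n^{(r)}\sim n^{r/2}\E(G^r)$ and hence $\E(Y_n^r)=a_n^{(r)}/n^{r/2}\to\E(G^r)$; the same bounds supply the uniform $L^1$-control needed for the uniform-integrability step. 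I expect the main obstacle to be exactly the uniform moment control of $C_{m_1,m_2}$ across the entire range of $J_n$: one must establish $b_n^{(r)}=O(n^{r/2})$ with the correct leading constant, which demands genuine uniform integrability of the two-color game and a careful estimate of the contribution of the unbalanced values of $j$, rather than the mere pointwise limit that sufficed in Lemma~\ref{lem:twoColorBinLimit}.
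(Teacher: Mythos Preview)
Your route is genuinely different from the paper's, and the overall architecture is sound: the lower bound $X_n\ge\frac{n-1}{2}$, the moment recursion derived from~\eqref{eqn:Xn_DistEqn}, and the iteration $a_n^{(r)}=\tfrac12 a_{n-1}^{(r)}+c_n^{(r)}$ are all correct. The paper does \emph{not} proceed this way. Instead it passes to the bivariate generating function $D(z,q)=\sum_{n\ge0}2^n\E(q^{X_n})z^n$, for which Lemma~\ref{lem:Dzq_formula} gives a closed form via the explicit expressions for $\tilde F(x,y,q)$ and $\tilde F_0(x,y,q)$ from Proposition~\ref{pro:Fxyq_formulae}. After the shift $\hat D(z,q)=D(z/\sqrt q,q)$ encoding $\hat X_n=X_n-\tfrac n2$, Lemma~\ref{lem:HatDzq_Expansion} extracts the local behaviour of $g_r(z)=[u^r]\hat D(z,1+u)$ at the dominant singularity $z=\tfrac12$, and a transfer lemma then yields $\E(\hat X_n^{\underline r})\sim (r+1)!\,8^{-r/2}\,n^{r/2}/\Gamma(\tfrac r2+1)$. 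This analytic-combinatorics argument never needs uniform moment control of $C_{m_1,m_2}$; the required information is already packaged into the generating function.

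Your plan, by contrast, hinges entirely on the step you correctly flag as the crux: a \emph{moment} upgrade of Lemma~\ref{lem:twoColorBinLimit}, i.e.\ $b_n^{(r)}=\E\big((C_{n-1-J_n,J_n}-\tfrac n2)^r\big)\sim n^{r/2}\E(G^r)$ together with the uniform bound $b_n^{(r)}=O(n^{r/2})$. This is a real gap in the proposal as written. The present paper contains only the distributional statement of Theorem~\ref{the:LinExp}, not moment convergence, and the phrase ``the remainder $R$ has uniformly sub-exponential tails on the scale $\sqrt{\min\{m_1,m_2\}}$'' would need a precise citation or proof; what you would actually use is something like $\sup_{m_1,m_2}\E\big((C_{m_1,m_2}-\max\{m_1,m_2\})^r\big)/\min\{m_1,m_2\}^{r/2}<\infty$ for every $r$, combined with $\E\big(|J_n-\tfrac{n-1}{2}|^r\big)=O(n^{r/2})$. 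Both pieces are obtainable from the explicit results in~\cite{PK2023}, but they have to be stated and checked, and the unbalanced tail (where $|j-\tfrac n2|\gg\sqrt n$) must be dispatched via the Gaussian decay of $\binom{n-1}{j}/2^{n-1}$ against the crude bound $|C_{n-1-j,j}-\tfrac n2|\le\tfrac n2$. Until that lemma is in place, the induction on $r$ cannot start. In short: your probabilistic reduction is correct and arguably more transparent than the singularity analysis, but it trades one nontrivial computation (the local expansion of $\hat D$) for another (uniform moment bounds for the two-color game across the full binomial mixing), and the latter is not yet supplied.
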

\begin{remark}
  Since the generalized gamma distributed r.v.\ $G$ is uniquely characterized by its moments (which easily follows, e.g., from simple growth bounds), we note that an application of the moment's convergence theorem of Fr\'echet and Shohat (see, e.g., \cite{Loe1977}) immediately shows convergence in distribution of $Y_{n}$ to $G$, thus gives an alternative proof of Theorem~\ref{the:Yn_LimitLaw}.
\end{remark}

 To show Theorem~\ref{the:Yn_Moments} we will again start with the recursive description of $D_{n}(q)$ given in Lemma~\ref{lem:Recurrence_Dn}, but in order to deal with this recurrence we use an alternative approach based on generating functions and basic techniques from analytic combinatorics~\cite{FlaSed}. Furthermore, we use explicit formul{\ae} for a suitable bivariate generating function of $\E\big(q^{C_{m_{1},m_{2}}}\big)$ and the so-called diagonal as have been derived in \cite{KnoPro2001,PK2023}. They can be stated in the following form.
\begin{prop}[\cite{KnoPro2001,PK2023}]\label{pro:Fxyq_formulae}
The g.f.\
$\tilde{F}(x,y,q) = \sum\limits_{m_{1} \ge m_{2} \ge 0} \binom{m_{1}+m_{2}}{m_{1}} \E\big(q^{C_{m_{1},m_{2}}}\big) x^{m_{1}} y^{m_{2}}$
and $\tilde{F}_{0}(x,y,q) = \sum\limits_{m \ge 0} \binom{2m}{m} \E\big(q^{C_{m,m}}\big) x^{m} y^{m}$
are given as follows:
\begin{align*}
  \tilde{F}(x,y,q) & = \frac{1-y}{1-qx-y} + \frac{qxy(q-(1+q)y)}{(1-qx-y)(1-B(qxy))(1-(1+q)B(qxy))}, \\
	\tilde{F}_{0}(x,y,q) & = \frac{1}{1-(1+q)B(qxy)}, \\
\end{align*}
where $B(t) = \frac{1-\sqrt{1-4t}}{2} = \sum_{n \ge 1} \frac{1}{n} \binom{2n-2}{n-1} t^{n}$ denotes the g.f.\ of the shifted Catalan-numbers.
\end{prop}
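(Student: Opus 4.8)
The plan is to derive both closed forms directly from the recurrence for $F_{m_1,m_2}(q)$ in Lemma~\ref{lem:Recurrence_Dn} by the kernel method, using that, via \eqref{eqn:Cm1m2_Fm1m2q_relation}, we have $\tilde{F}(x,y,q) = \sum_{m_1 \ge m_2 \ge 0} F_{m_1,m_2}(q)\,x^{m_1}y^{m_2}$ and $\tilde{F}_{0}(x,y,q) = \sum_{m \ge 0} F_{m,m}(q)\,(xy)^{m}$. First I would extract the two boundary relations that the recurrence forces. Along the axis the initial condition reads $F_{m_1,0}(q) = q^{m_1}$, contributing the free part $\frac{1}{1-qx}$ at $y=0$. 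On the diagonal $m_1=m_2=m \ge 1$ the recurrence together with the symmetry $F_{m-1,m}(q)=F_{m,m-1}(q)$ collapses to $F_{m,m}(q) = (1+q)\,F_{m,m-1}(q)$; this single relation is the source of the factor $1+q$ appearing in both stated formulae and is what links the diagonal series $\tilde{F}_{0}$ to the first off-diagonal values.

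Second, I would turn the recurrence $F_{m_1,m_2}(q) = q\,F_{m_1-1,m_2}(q) + F_{m_1,m_2-1}(q)$, valid on $\{m_1 > m_2 \ge 1\}$ together with the diagonal relation, into a functional equation for $\tilde{F}$. Multiplying by $x^{m_1}y^{m_2}$ and summing over $m_1 \ge m_2 \ge 0$ produces the kernel $1-qx-y$: schematically, $(1-qx-y)\,\tilde{F}(x,y,q)$ equals the axis contribution plus correction terms supported on the diagonal, i.e.\ expressible through $\tilde{F}_{0}(x,y,q)$ and the shifted diagonal series $\sum_{m} F_{m,m-1}(q)\,(xy)^{m}$. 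The careful point is that the left-shift $F_{m_1-1,m_2}$ steps off the region $\{m_1 \ge m_2\}$ precisely on the diagonal, where $m_1-1<m_2$, so each such term must be reintroduced via the symmetry relation; collecting exactly these terms is what couples the equation to $\tilde{F}_{0}$.

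Third, I would close the system by the kernel method. The diagonal relation combined with the off-diagonal recurrence describes weighted lattice paths (as in Remark~\ref{rem:Urn_LatticPath}) that, read from the diagonal, decompose into excursions; this Dyck-path structure is exactly what manufactures the Catalan generating function in the product variable $t=qxy$, which satisfies $B = t + B^{2}$, i.e.\ $B(t)=\tfrac{1-\sqrt{1-4t}}{2}$, each return-to-diagonal excursion contributing one correct guess of weight $q$ and one $a$- and one $b$-step. Solving the induced equation for the diagonal series yields the geometric form $\tilde{F}_{0} = \frac{1}{1-(1+q)B(qxy)}$, after which back-substitution into the functional equation and simplification against the kernel $1-qx-y$ gives the stated expression for $\tilde{F}$, with $\frac{1-y}{1-qx-y}$ the free solution dictated by the axis data and the second summand the diagonal correction carrying the factors $1-B(qxy)$ and $1-(1+q)B(qxy)$.

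I expect the main obstacle to be the bookkeeping in the second step: isolating exactly which boundary terms survive when the left-shift leaves $\{m_1 \ge m_2\}$, and re-expressing them through the symmetry so that the right-hand side depends only on $\tilde{F}_{0}$ and the explicitly known axis series. A clean alternative, which I would present to avoid this fragile accounting, is to verify the closed forms a posteriori: expanding the proposed expressions as power series and checking that their coefficients satisfy the recurrence, the axis condition $F_{m_1,0}=q^{m_1}$, and the diagonal relation $F_{m,m}=(1+q)F_{m,m-1}$; since the recurrence with these boundary data determines all $F_{m_1,m_2}(q)$ uniquely, equality follows. The Catalan/square-root singularity emerging from the diagonal excursions is the conceptual crux, but once $\tilde{F}_{0}$ is identified the remaining manipulations against the kernel are routine.
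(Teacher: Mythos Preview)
The paper does not give its own proof of this proposition; it is quoted from \cite{KnoPro2001,PK2023} and used as input for Lemma~\ref{lem:Dzq_formula}. Your outline is a sound self-contained derivation: the diagonal collapse $F_{m,m}=(1+q)F_{m,m-1}$ obtained from the symmetry is correct, the kernel $1-qx-y$ is the right one, and the Catalan series $B(qxy)$ does arise from decomposing the lattice paths into excursions between successive hits of the diagonal (each such excursion contributing an equal number of correct and incorrect guesses, whence the product variable $qxy$). The bookkeeping you flag in the second step is genuine but manageable, and your verification-by-substitution alternative would also close the argument, since the recurrence of Lemma~\ref{lem:Recurrence_Dn} together with the axis data and the diagonal relation determines all $F_{m_1,m_2}(q)$ uniquely.
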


With these results we obtain a generating functions solution of recurrence~\eqref{eq:Start} for $D_{n}(q)$.
\begin{lem}\label{lem:Dzq_formula}
  The bivariate generating function
	\begin{equation*}
	  D(z,q) = \sum_{n \ge 0} D_{n}(q) z^{n} = \sum_{n \ge 0} 2^{n} \E\big(q^{X_{n}}\big) z^{n}
	\end{equation*}
	is given by the following explicit formula, with $B(t) = \frac{1-\sqrt{1-4t}}{2}$:
	\begin{multline*}
	  D(z,q) = \frac{1-z}{(1-qz)^{2}} + \frac{z}{1-qz} \left[\frac{2(1-z)}{1-(1+q)z}\right.\\
		\left.\mbox{} + \frac{2qz^{2}(q-(1+q)z)}{(1-(1+q)z)(1-(1+q)B(qz^{2}))(1-B(qz^{2}))} - \frac{1}{1-(1+q)B(qz^{2})}\right].
	\end{multline*}
\end{lem}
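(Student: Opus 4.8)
The plan is to convert the recurrence~\eqref{eq:Start} into a single linear functional equation for $D(z,q)$ and then solve it, importing the closed forms of Proposition~\ref{pro:Fxyq_formulae} only at the very end. Multiplying~\eqref{eq:Start} by $z^{n}$ and summing over $n\ge 1$, the left-hand side becomes $D(z,q)-D_{0}(q)=D(z,q)-1$, the term $qD_{n-1}(q)$ contributes $qz\,D(z,q)$, and $q^{n}$ contributes $\frac{qz}{1-qz}$. The only nontrivial piece is the generating function of the antidiagonal sums $\sum_{j=0}^{n-2}F_{n-1-j,j}(q)$.

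To handle that piece I would introduce the full two-sided generating function $\hat{F}(x,y,q)=\sum_{m_{1},m_{2}\ge 0}F_{m_{1},m_{2}}(q)\,x^{m_{1}}y^{m_{2}}$, whose diagonal $\hat{F}(z,z,q)$ collects, as the coefficient of $z^{m}$, precisely the \emph{complete} antidiagonal sums $\sum_{j=0}^{m}F_{m-j,j}(q)$. Two bookkeeping adjustments then relate this to what appears in~\eqref{eq:Start}. First, the symmetry $F_{m_{1},m_{2}}(q)=F_{m_{2},m_{1}}(q)$ lets me reconstruct $\hat{F}$ from the triangular functions of Proposition~\ref{pro:Fxyq_formulae}: summing the upper triangle $\tilde{F}(x,y,q)$ and the lower triangle $\tilde{F}(y,x,q)$ double-counts the diagonal, so that $\hat{F}(x,y,q)=\tilde{F}(x,y,q)+\tilde{F}(y,x,q)-\tilde{F}_{0}(x,y,q)$, and in particular $\hat{F}(z,z,q)=2\tilde{F}(z,z,q)-\tilde{F}_{0}(z,z,q)$. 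Second, the sum in~\eqref{eq:Start} stops at $j=n-2$, i.e.\ it omits the boundary term $F_{0,\,n-1}(q)$; since an all-one-color word is always guessed correctly we have $C_{0,m}=m$ and hence $F_{0,m}(q)=q^{m}$, so passing from the complete diagonal to the truncated sum amounts to subtracting $\sum_{m\ge 1}q^{m}z^{m}=\frac{qz}{1-qz}$ (together with the $m=0$ term $F_{0,0}(q)=1$) inside $\hat{F}(z,z,q)$.

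Collecting everything produces a linear equation of the shape
\[
(1-qz)\,D(z,q)=(1-z)+z\,\hat{F}(z,z,q)+\frac{qz(1-z)}{1-qz},
\]
and I expect the first and last summands to combine cleanly into $\frac{1-z}{(1-qz)^{2}}$, leaving $D(z,q)=\frac{1-z}{(1-qz)^{2}}+\frac{z}{1-qz}\,\hat{F}(z,z,q)$. Substituting $\hat{F}(z,z,q)=2\tilde{F}(z,z,q)-\tilde{F}_{0}(z,z,q)$ together with the formulae of Proposition~\ref{pro:Fxyq_formulae} evaluated at $x=y=z$ (so that $qxy=qz^{2}$ and $1-qx-y=1-(1+q)z$) should reproduce the claimed bracketed expression verbatim.

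The main obstacle is not any deep analytic difficulty but the careful index bookkeeping: getting the shift $n\mapsto m=n-1$ right in the convolution, correctly identifying and subtracting the single boundary term $F_{0,m}(q)=q^{m}$ that the truncated sum drops, and assembling the two-sided $\hat{F}$ from the triangular generating functions without miscounting the diagonal. Each of these is an off-by-one that, if mishandled, perturbs the final formula by an explicit rational function; accordingly I would verify the first few coefficients in $z$ against $D_{0}(q)=1$ and directly against~\eqref{eq:Start} as a consistency check.
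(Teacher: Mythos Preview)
Your proposal is correct and follows essentially the same route as the paper: convert~\eqref{eq:Start} into the linear functional equation $D(z,q)=\frac{1-z}{(1-qz)^{2}}+\frac{z}{1-qz}\,\hat{F}(z,z,q)$, express the symmetric two-variable generating function via $\hat{F}=\tilde{F}(x,y,q)+\tilde{F}(y,x,q)-\tilde{F}_{0}(x,y,q)$, and then substitute the closed forms of Proposition~\ref{pro:Fxyq_formulae} at $x=y=z$. The only difference is cosmetic: you spell out the truncation bookkeeping (the missing $F_{0,n-1}(q)=q^{n-1}$ term) explicitly, whereas the paper absorbs it directly into the combined term $\frac{1-z}{1-qz}$.
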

\begin{proof}
  Introducing the auxiliary g.f.\ $F(x,y,q) = \sum_{m_{1} \ge 0} \sum_{m_{2} \ge 0} F_{m_{1},m_{2}}(q) x^{m_{1}} y^{m_{2}}$, we obtain from recurrence~\eqref{eq:Start} after multiplying with $z^{n}$ and summing over integers $n \ge 0$ the relation
	\begin{equation*}
	  D(z,q) = qz D(z,q) + \frac{1-z}{1-qz} + z F(z,z,q),
	\end{equation*}
	and further
	\begin{equation}\label{eqn:Dzq_relation}
	  D(z,q) = \frac{1-z}{(1-qz)^{2}} + \frac{z F(z,z,q)}{1-qz}.
	\end{equation}
	Using the relation
	\begin{equation*}
	  F(x,y,q) = \tilde{F}(x,y,q) + \tilde{F}(y,x,q) - \tilde{F}_{0}(x,y,q),
	\end{equation*}
	which is immediate from the definitions given in \eqref{eqn:Cm1m2_Fm1m2q_relation} and Proposition~\ref{pro:Fxyq_formulae}, and the explicit formul{\ae} given in Proposition~\ref{pro:Fxyq_formulae}, the stated result follows from \eqref{eqn:Dzq_relation}.
\end{proof}
We are interested in the asymptotic behaviour of the moments of the shifted r.v.\ $\hat{X}_{n} := X_{n} - n/2 = \sqrt{n} \, Y_{n}$. The corresponding g.f.\ $\hat{D}(z,q)$ is closely related to $D(z,q)$ as defined in Lemma~\ref{lem:Dzq_formula}, since we get
\begin{equation}\label{eqn:HatDzq_definition}
  \hat{D}(z,q) = \sum_{n \ge 0} 2^{n} \E\big(q^{\hat{X}_{n}}\big) z^{n} = \sum_{n \ge 0} 2^{n} \E\big(q^{X_{n}}\big) q^{-\frac{n}{2}} z^{n} = D\Big(\frac{z}{\sqrt{q}}, q\Big).
\end{equation}
Actually, we will set $q=1+u$ and use that the coefficients of the probability generating function in a series expansion around $u=0$ yield the factorial moments of $\hat{X}_{n}$:
\begin{equation*}
  \E\big((1+u)^{\hat{X}_{n}}\big) = \sum_{r \ge 0} u^{r} \, \E\Big({\textstyle{\binom{\hat{X}_{n}}{r}}}\Big) = \sum_{r \ge 0} \E\big(\hat{X}_{n}^{\underline{r}}\big) \frac{u^{r}}{r!}.
\end{equation*}
Thus one gets
\begin{equation}\label{eqn:HatDzu_def}
  \hat{D}(z,1+u) = \sum_{r \ge 0} g_{r}(z) u^{r} = \sum_{r \ge 0} u^{r} \cdot \frac{1}{r!} \sum_{n \ge 0} 2^{n} \E\big(\hat{X}_{n}^{\underline{r}}\big) z^{n},
\end{equation}
and in order to determine the asymptotic behaviour of the factorial (and raw) moments of $\hat{X}_{n}$ we carry out a local expansion of the functions $g_{r}(z) = [u^{r}] \hat{D}(z,1+u)$ around the dominant singularities followed by basic applications of so-called transfer lemmata.

The next lemma states the relevant properties of the coefficients of $\hat{D}(z,1+u)$.
\begin{lem}\label{lem:HatDzq_Expansion}
  Let $\hat{D}(z,q)$ be the g.f.\ of the shifted r.v.\ $\hat{X}_{n} = X_{n} - \frac{n}{2}$ as defined in \eqref{eqn:HatDzq_definition}. Then the functions $g_{r}(z) = [u^{r}] \hat{D}(z,1+u)$ obtained as coefficients in a series expansion of $\hat{D}(z,1+u)$ around $u=0$ have radius of convergence $\frac{1}{2}$ and, for $r \ge 1$, have the two dominant singularities $\rho_{1,2} = \pm \frac{1}{2}$. Moreover, the local behaviour of $g_{r}(z)$ around $\rho := \rho_{1} = \frac{1}{2}$ is given as follows, with $\mathcal{Z} := \frac{1}{1-2z}$:
\begin{equation*}
  g_{r}(z) = (r+1) \big(\frac{1}{8}\big)^{\frac{r}{2}} \mathcal{Z}^{\frac{r}{2} +1} \cdot \big(1+\mathcal{O}\big(\mathcal{Z}^{-\frac{1}{2}}\big)\big), \quad r \ge 0.
\end{equation*}
\end{lem}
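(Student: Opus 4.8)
The plan is to read the required local expansion straight off the explicit formula for $D(z,q)$ in Lemma~\ref{lem:Dzq_formula}, via the shift $\hat D(z,q)=D(z/\sqrt q,q)$ from \eqref{eqn:HatDzq_definition}, and then to extract $g_r(z)=[u^r]\hat D(z,1+u)$ by a confluent singularity analysis near $z=\tfrac12$. First I would perform the substitution $z\mapsto z/\sqrt q$ and simplify. Two features make the bookkeeping manageable: the argument of $B$ becomes $q\,(z/\sqrt q)^2=z^2$, which is free of $q$; and, setting $A(q):=\sqrt q+\tfrac1{\sqrt q}$, one has $1-(1+q)\tfrac z{\sqrt q}=1-A(q)z=:P$. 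Writing $w:=\sqrt{1-4z^2}=1-2B(z^2)$ and $C:=1-(1+q)B(z^2)=w-(q-1)B(z^2)$, and using the identity $z^2=B(z^2)\bigl(1-B(z^2)\bigr)$ together with $q-A(q)z=P+(q-1)$, the bracket in Lemma~\ref{lem:Dzq_formula} regroups into
\[
\frac{2\,(1-z/\sqrt q)}{P}+\frac{2z^2\,(q-1)}{P\,C\,(1-B(z^2))}-\frac wC .
\]
Thus the entire relevant singular behaviour is carried by the two factors $P$ and $C$; the prefactor $\tfrac{z/\sqrt q}{1-\sqrt q z}$, the factor $1-B(z^2)$, and the leading summand $\tfrac{1-z/\sqrt q}{(1-\sqrt q z)^2}$ of $\hat D$ are all analytic at $z=\pm\tfrac12$.

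Next I would locate the singularities. For $q$ near $1$ the only candidates inside $|z|\le\tfrac12$ are $z=\pm\tfrac12$: the pole $P=0$ sits at $z=1/A(q)$, equal to $\tfrac12$ at $q=1$; the branch point of $w$ and of $B(z^2)$ is pinned at $z^2=\tfrac14$; and $C=0$ collapses there as well, whereas $1-\sqrt q\,z$ only vanishes near $z=1$. This yields radius of convergence $\tfrac12$. At $z=-\tfrac12$ the factor $P$ remains regular, so only the square root $w\sim\sqrt2\,\sqrt{1+2z}$ produces a (weaker) singularity there, which for $r\ge1$ accounts for the second dominant singularity $\rho_2=-\tfrac12$.

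The structural fact driving the exponent is that $A(q)=\sqrt q+1/\sqrt q\ge2$ has a strict minimum at $q=1$, so that $A(1)=2$ and, crucially, $A'(1)=0$; hence $A(1+u)=2+\tfrac14u^2+O(u^3)$ and
\[
P=(1-2z)-\tfrac z4u^2+O(u^3)=\mathcal Z^{-1}-\tfrac z4u^2+O(u^3)
\]
contains no term linear in $u$. Setting $s:=\mathcal Z^{-1/2}=\sqrt{1-2z}$, near $z=\tfrac12$ one has $P\approx s^2-\tfrac z4u^2$ and $C\approx\sqrt2\,s-\tfrac u2$. Extracting $[u^r]$ termwise, the third group gives $[u^r]\bigl(-\tfrac wC\bigr)=-B(z^2)^r/w^r=O(\mathcal Z^{r/2})$, which is negligible. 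For the first two groups the confluent expansions $[u^{2j}]\tfrac1P\sim(\tfrac z4)^j s^{-(2j+2)}$ and $[u^k]\tfrac1C\sim \mathrm{const}\cdot s^{-(k+1)}$ feed into a convolution in which, because $A'(1)=0$ forces the balanced scaling $u\sim s$, every admissible term carries the same power $s^{-(r+2)}=\mathcal Z^{r/2+1}$. Summing these equal leading contributions (with the first group active only for even $r$, the second for all $r$) collapses to the clean coefficient $(r+1)(\tfrac18)^{r/2}$ for both parities, giving $g_r(z)=(r+1)(\tfrac18)^{r/2}\mathcal Z^{r/2+1}\bigl(1+O(\mathcal Z^{-1/2})\bigr)$.

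The main obstacle is precisely this confluence: as $q\to1$ the simple pole $P=0$ merges with the square-root branch point at $z=\tfrac12$, so $\hat D(z,q)$ is not of a fixed algebraic–logarithmic singular type uniformly in $q$ and no single transfer lemma applies to it directly. The delicate part is to carry out the joint expansion in the two small quantities $u$ and $s=\sqrt{1-2z}$ with the correct relative scaling, and to verify that the convolution of the $\tfrac1P$ and $\tfrac1C$ contributions produces, at leading order, index-independent equal terms — the mechanism responsible for the prefactor $(r+1)$ — while confirming that the remaining group and the $z=-\tfrac12$ contribution stay of strictly lower order $\mathcal Z^{r/2}$ or $\mathcal Z^{(r+1)/2}$.
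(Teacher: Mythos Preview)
Your plan is correct and, at the structural level, matches the paper's argument: both extract $g_r(z)=[u^r]\hat D(z,1+u)$ term by term and read the dominant $\mathcal Z^{r/2+1}$ behaviour from the same two building blocks $P=1-A(q)z$ and $C=1-(1+q)B(z^2)$. Your algebra is sound; in particular the regrouping via the Catalan identity $z^2=B(z^2)(1-B(z^2))$, which collapses the bracket to
\[
\frac{2(1-z/\sqrt q)}{P}+\frac{2z^2(q-1)}{PC\,(1-B(z^2))}-\frac{w}{C},
\]
is correct, as is your expansion $[u^k](1/C)=B(z^2)^k/w^{k+1}$ and the observation that $-w/C$ only contributes at order~$\mathcal Z^{r/2}$.

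The difference from the paper is organisational rather than conceptual. The paper keeps the three ``raw'' summands $\hat D^{[1]},\hat D^{[2]},\hat D^{[3]}$ of the explicit formula and expands each separately via the binomial series, producing two parity-dependent partial results $g_r^{[2]}$, $g_r^{[3]}$ that only combine to $(r+1)(\tfrac18)^{r/2}\mathcal Z^{r/2+1}$ at the very end. Your route first simplifies algebraically and then exploits the single observation $A'(1)=0$ (hence $P=(1-2z)-\tfrac z4 u^2+O(u^3)$) to explain \emph{why} the leading behaviour of $1/P$ lives on even $u$-powers only and why all terms of the convolution $[u^{r-1}]\bigl(\tfrac1P\cdot\tfrac1C\bigr)$ contribute at the same order~$\mathcal Z^{r/2+1}$. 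What this buys you is a transparent mechanism for the prefactor: the second group yields $\lfloor(r+1)/2\rfloor$ equal terms of size $2(\tfrac18)^{r/2}$, and the first group adds one more copy of $(\tfrac18)^{r/2}$ when $r$ is even, giving $(r+1)(\tfrac18)^{r/2}$ in both parities. The paper obtains the same number by a longer bookkeeping; your version trades that for the short verification of $A'(1)=0$ and the Catalan regrouping.

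One point you should make explicit in a full write-up (you acknowledge it only as ``the delicate part''): the final coefficient is not self-evident from your sketch, so actually carry out the sum. Concretely, with $[u^{2j}](1/P)\sim(\tfrac18)^j\mathcal Z^{j+1}$ and $[u^k](1/C)\sim 2^{-(3k+1)/2}\mathcal Z^{(k+1)/2}$, the second group gives $\sum_{2j+k=r-1}2^{-3j-(3k+1)/2}\,\mathcal Z^{r/2+1}=2(\tfrac18)^{r/2}\lfloor(r+1)/2\rfloor\,\mathcal Z^{r/2+1}$, and adding the first group's $(\tfrac18)^{r/2}\mathcal Z^{r/2+1}$ for even $r$ yields $(r+1)(\tfrac18)^{r/2}$. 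You should also record, as the paper does, that cross-terms coming from the $u$-dependence of the analytic prefactors $2(1-z/\sqrt q)$ and $\tfrac{z/\sqrt q}{1-\sqrt q\,z}$ are of order at most $\mathcal Z^{(r+1)/2}$ and hence absorbed in the $O(\mathcal Z^{-1/2})$ error.
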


\begin{remark}\label{rem:Second_Singularity}
  We remark that a closer inspection shows that the second dominant singularity $\rho_{2} = -\frac{1}{2}$ occurring in the functions $g_{r}(z)$ defined by Lemma~\ref{lem:HatDzq_Expansion} yield contributions that do not affect the main terms stemming from the contributions of the singularity $\rho = \rho_{1} = \frac{1}{2}$. Since we are here only interested in the main term contribution, we will restrict ourselves to elaborate the expansion around $\rho$. However, the presence of two dominant singularities is reflected by the fact, that lower order terms of the asymptotic expansions of the $r$-th moments of $X_{n}$ are different for $n$ even and $n$ odd, resp., as has been observed in \cite{KT2023}.
\end{remark}

\begin{proof}
Using \eqref{eqn:HatDzq_definition} and the explicit formula of $D(z,q)$ given in Lemma~\ref{lem:Dzq_formula}, one gets after simple manipulations
\begin{multline}\label{eqn:HatDzq_explicit}
  \hat{D}(z,q) = \frac{\sqrt{q} (\sqrt{q}-z)}{(\sqrt{q}-qz)^{2}} + \frac{2z(\sqrt{q}-z)}{(\sqrt{q}-(1+q)z)(\sqrt{q}-qz)}\\
		\mbox{} + \frac{z\big(\sqrt{q}(q-1) + \big((1+q)z-q^{\frac{3}{2}}\big)\big(1-2B(z^{2})\big)\big)}{(1-(1+q)B(z^{2}))(\sqrt{q}-(1+q)z)(\sqrt{q}-qz)}.
\end{multline}
We set $q=1+u$ and carry out a series expansion of the summands of \eqref{eqn:HatDzq_explicit} around $u=0$. Since this is a rather straightforward task using essentially the binomial series, but leads to rather lengthy computations when one intends to be exhaustive in every step, we will here only give a sketch of such computations and are omitting some of the details.

When treating the first summand in \eqref{eqn:HatDzq_explicit} and inspecting the coefficients in the series expansion around $u = q-1=0$, 
\begin{equation*}
  \hat{D}^{[1]}(z,q) := \frac{\sqrt{q} \, (\sqrt{q}-z)}{(\sqrt{q}-qz)^{2}} = \sum_{r \ge 0} g_{r}^{[1]}(z) u^{r},
\end{equation*}
one easily observes that the functions $g_{r}^{[1]}(z)$ are analytic for $|z|<1$ (to be more precise, the unique dominant singularity is at $z=1$), which causes exponentially small contributions for the coefficients $[z^{r}] g_{r}^{[1]}(z)$ compared to the remaining summands. Thus, these contributions are negligible and do not have to be considered further.

When expanding the second summand of \eqref{eqn:HatDzq_explicit} around $u=q-1=0$,
\begin{equation}\label{eqn:gr2_def}
  \hat{D}^{[2]}(z,q) := \frac{2z(\sqrt{q}-z)}{(\sqrt{q}-(1+q)z)(\sqrt{q}-qz)} = \sum_{r \ge 0} g_{r}^{[2]}(z) u^{r},
\end{equation}
we have to treat with more care the factor $(\sqrt{q}-(1+q)z)^{-1}$. First, by using the binomial series we get
\begin{equation*}
  \sqrt{q} - (1+q)z = \sqrt{1+u} - (2+u)z = (1-2z)\Big(1+\frac{u}{2} + \sum_{k \ge 2} \frac{c_{k}}{1-2z} u^{k}\Big),
\end{equation*}
with $c_{k} = \binom{\frac{1}{2}}{k}$, and further, by using the geometric series,
\begin{multline}\label{eqn:intermediate_expansion1}
  \frac{1}{\sqrt{q} - (1+q)z} = \frac{1}{(1-2z) \big(1+\frac{u}{2}\big(1+\sum_{k \ge 1} \frac{2 c_{k+1}}{1-2z} u^{k}\big)\big)}\\
	= \mathcal{Z} \Big(1+\sum_{\ell \ge 1} (-\frac{1}{2})^{\ell} u^{\ell} \big(1+\sum_{k \ge 1} 2 c_{k+1} \mathcal{Z} u^{k}\big)^{\ell}\Big).
\end{multline}
From this expansion it is apparent that all the coefficients of $u^{r}$ in the series expansion, considered as functions in $z$, have a unique dominant singularity at $z= \rho = \frac{1}{2}$. Furthermore, for $\ell \ge 1$ we obtain the following expansion in powers of $u$ and locally around $z= \rho$, i.e., $\mathcal{Z}=\infty$:
\begin{align*}
  \big(1+\sum_{k \ge 1} 2 c_{k+1} \mathcal{Z} u^{k}\big)^{\ell} & = 1 + \ell (2c_{2}) \mathcal{Z} u + \sum_{j=2}^{\ell} \binom{\ell}{j} (2c_{2})^{j} \mathcal{Z}^{j} \big(1+\mathcal{O}(\mathcal{Z}^{-1})\big)u^{j}\\
	& \quad \mbox{} + \ell(2c_{2})^{\ell-1} (2c_{3}) \mathcal{Z}^{\ell} \big(1+\mathcal{O}(\mathcal{Z}^{-1})\big)u^{\ell+1} + \sum_{k \ge \ell+2} \mathcal{O}(\mathcal{Z}^{\ell}) u^{k},
\end{align*}
which, after plugging into \eqref{eqn:intermediate_expansion1} and using $c_{2} = -\frac{1}{8}$, $c_{3}=\frac{1}{16}$ leads to the required expansion:
\begin{multline}\label{eqn:intermediate_expansion2}
  \frac{1}{\sqrt{q} - (1+q)z} = \mathcal{Z} - \frac{\mathcal{Z}}{2} u + \sum_{\ell \ge 1} \left[(\frac{1}{8})^{\ell} \mathcal{Z}^{\ell+1} (1+\mathcal{O}(\mathcal{Z}^{-1})) u^{2\ell}\right.\\
	\left.\mbox{} -\frac{1}{2} (\frac{1}{8})^{\ell} (2\ell+1) \mathcal{Z}^{\ell+1} (1+\mathcal{O}(\mathcal{Z}^{-1})) u^{2\ell+1}\right].
\end{multline}
Next, it is easy to see that the coefficients in the expansion around $u=0$ of the remaining factors of $\hat{D}^{[2]}(z,q)$ are functions in $z$ with radius of convergence $1$, and one gets
\begin{equation}\label{eqn:intermediate_expansion3}
  \frac{2z(\sqrt{q}-z)}{\sqrt{q}-qz} = 1+ \mathcal{O}(\mathcal{Z}^{-1}) + (1+ \mathcal{O}(\mathcal{Z}^{-1})) u + \sum_{r \ge 2} \mathcal{O}(\mathcal{Z}^{0}) u^{r}.
\end{equation}
Combining the expansions \eqref{eqn:intermediate_expansion2} and \eqref{eqn:intermediate_expansion3}, we obtain that the functions $g_{r}^{[2]}(z)$ in expansion~\eqref{eqn:gr2_def} have a unique dominant singularity at $z=\rho$ and allow there the local expansions
\begin{equation}\label{eqn:gr2_expansion}
  g_{r}^{[2]}(z) = \begin{cases}
	  (\frac{1}{8})^{\ell} \mathcal{Z}^{\ell+1} (1+\mathcal{O}(\mathcal{Z}^{-1})), & \quad \text{for $r = 2 \ell$ even},\\
		-\frac{1}{2} (\frac{1}{8})^{\ell} (2\ell-1) \mathcal{Z}^{\ell+1} (1+\mathcal{O}(\mathcal{Z}^{-1})), & \quad \text{for $r = 2 \ell+1$ odd}.
	\end{cases}
\end{equation}

Finally, we consider an expansion in powers of $u=q-1$ of the third summand of \eqref{eqn:HatDzq_explicit},
\begin{equation}\label{eqn:gr3_def}
  \hat{D}^{[3]}(z,q) := \frac{z\big(\sqrt{q}(q-1) + \big((1+q)z-q^{\frac{3}{2}}\big)\big(1-2B(z^{2})\big)\big)}{(1-(1+q)B(z^{2}))(\sqrt{q}-(1+q)z)(\sqrt{q}-qz)}.
\end{equation}
Let us define $\tilde{\mathcal{Z}} = \frac{1}{1-4z^{2}}$. Since $B(z^{2}) = \frac{1}{2}(1-\tilde{\mathcal{Z}}^{-\frac{1}{2}})$, we get
\begin{equation*}
  1-(1+q)B(z^{2}) = \tilde{\mathcal{Z}}^{-\frac{1}{2}} \Big(1-\frac{1}{2}(\tilde{\mathcal{Z}}^{\frac{1}{2}}-1)\Big)
\end{equation*}
and thus
\begin{equation}\label{eqn:gr3z_intermediate}
  \frac{1}{1-(1+q)B(z^{2})} = \frac{\tilde{\mathcal{Z}}^{\frac{1}{2}}}{1-\frac{1}{2}(\tilde{\mathcal{Z}}^{\frac{1}{2}}-1)u}
	= \tilde{\mathcal{Z}}^{\frac{1}{2}} \Big(1+\sum_{r \ge 1}\Big(\frac{1}{2}(\tilde{\mathcal{Z}}^{\frac{1}{2}}-1)u\Big)^{r}\Big).
\end{equation}
Therefore, for this factor of $\hat{D}^{[3]}(z,q)$ we obtain that the coefficients of $u^{r}$ are functions in $z$ with two dominant singularities $\rho_{1,2} = \pm \frac{1}{2}$. However, as already pointed out in Remark~\ref{rem:Second_Singularity}, the contributions stemming from the singularity $\rho_{2}=-\frac{1}{2}$ do not affect the main term contributions and thus they are not considered any further. Since $\tilde{\mathcal{Z}} = \frac{1}{(1-2z)(1+2z)} = \frac{1}{2} \mathcal{Z} (1+\mathcal{O}(\mathcal{Z}^{-1}))$, we thus obtain from \eqref{eqn:gr3z_intermediate} the local expansion around $z=\rho$:
\begin{equation}\label{eqn:intermediate_expansion4}
  \frac{1}{1-(1+q)B(z^{2})} = \sum_{r \ge 0} (\frac{1}{2})^{\frac{3r+1}{2}} \mathcal{Z}^{\frac{r+1}{2}} (1+\mathcal{O}(\mathcal{Z}^{-\frac{1}{2}})) u^{r}.
\end{equation}
In a similar fashion one obtains the expansion
\begin{multline}\label{eqn:intermediate_expansion5}
  \frac{z\big(\sqrt{q}(q-1) + \big((1+q)z-q^{\frac{3}{2}}\big)\big(1-2B(z^{2})\big)\big)}{\sqrt{q}-qz}\\
	= -2^{\frac{1}{2}} \mathcal{Z}^{-\frac{3}{2}} (1+\mathcal{O}(\mathcal{Z}^{-1})) + (1+\mathcal{O}(\mathcal{Z}^{-\frac{1}{2}})) u + \sum_{r \ge 2} \mathcal{O}(\mathcal{Z}^{0}) u^{r},
\end{multline}
whereas the last factor of $\hat{D}^{[3]}(z,q)$ has been treated already in \eqref{eqn:intermediate_expansion2}. Combining expansions \eqref{eqn:intermediate_expansion4}, \eqref{eqn:intermediate_expansion5} and \eqref{eqn:intermediate_expansion2}, we get
\begin{align}\label{eqn:HatD3zq_expansion1}
  & \hat{D}^{[3]}(z,q) = \Big(\sum_{r \ge 0} (\frac{1}{8})^{\frac{r}{2}} \mathcal{Z}^{\frac{r}{2}} (1+\mathcal{O}(\mathcal{Z}^{-\frac{1}{2}})) u^{r}\Big)\\
	& \qquad \cdot \Big(\sum_{\ell \ge 0} (\frac{1}{8})^{\ell} \mathcal{Z}^{\ell} (1+\mathcal{O}(\mathcal{Z}^{-1})) u^{2\ell} + (-\frac{1}{2}) (\frac{1}{8})^{\ell} (2\ell+1) \mathcal{Z}^{\ell} (1+\mathcal{O}(\mathcal{Z}^{-1})) u^{2\ell+1}\Big)\notag\\
	& \qquad \cdot \Big(-(1+\mathcal{O}(\mathcal{Z}^{-1})) +(\frac{1}{2})^{\frac{1}{2}} \mathcal{Z}^{\frac{3}{2}} (1+\mathcal{O}(\mathcal{Z}^{-\frac{1}{2}})) u + \sum_{r \ge 2} \mathcal{O}(\mathcal{Z}^{\frac{3}{2}}) u^{r}\Big).\notag
\end{align}
To compute the Cauchy product of the first two factors of \eqref{eqn:HatD3zq_expansion1} we use (with some coefficients $\alpha_{r}, \beta_{r} \in \mathbb{R}$):
\begin{multline*}
  \Big(\sum_{r \ge 0} \alpha_{r} \mathcal{Z}^{\frac{r}{2}} (1+\mathcal{O}(\mathcal{Z}^{-\frac{1}{2}})) u^{r} \Big)
	\cdot \Big(\sum_{r \ge 0} \beta_{r} \mathcal{Z}^{\lfloor \frac{r}{2} \rfloor} (1+\mathcal{O}(\mathcal{Z}^{-\frac{1}{2}})) u^{r} \Big)\\
	= \sum_{r \ge 0} \gamma_{r} \mathcal{Z}^{\frac{r}{2}} (1+\mathcal{O}(\mathcal{Z}^{-\frac{1}{2}})) u^{r}, \qquad \text{with} \quad \gamma_{r} = \sum_{\ell=0}^{\lfloor \frac{r}{2} \rfloor} \beta_{2\ell} \, \alpha_{r-2\ell}.
\end{multline*}
In particular, for $\alpha_{r} = (1/8)^{\frac{r}{2}}$ and $\beta_{2\ell} = (1/8)^{\ell}$ one gets $\gamma_{r} = (1/8)^{\frac{r}{2}} (\lfloor r/2 \rfloor+1)$, which eventually shows that the coefficients $g_{r}^{[3]}(z)$ in the expansion of $\hat{D}^{[3]}(z,q)$ around $u=q-1=0$ are given as follows:
\begin{equation}\label{eqn:gr3_expansion}
  g_{r}^{[3]}(z) = \begin{cases}
	  -(1+\mathcal{O}(\mathcal{Z}^{-1})), & \quad \text{for $r=0$},\\
		2 (\frac{1}{8})^{\frac{r}{2}} (\lfloor \frac{r-1}{2} \rfloor + 1) \mathcal{Z}^{\frac{r}{2}+1} (1+\mathcal{O}(\mathcal{Z}^{-\frac{1}{2}})), & \quad \text{for $r \ge 1$}.
	\end{cases}
\end{equation}

Thus, combining \eqref{eqn:gr2_expansion} and \eqref{eqn:gr3_expansion} one obtains, after simple manipulations, the stated local expansion of the coefficients $g_{r}(z)=g_{r}^{[1]}(z)+g_{r}^{[2]}(z)+g_{r}^{[3]}(z)$ in the series expansion of $\hat{D}(z,q)$ around $u=q-1=0$.
\end{proof}
The expansion of $\hat{D}(z,q)$ stated in Lemma~\ref{lem:HatDzq_Expansion} easily yields the asymptotic behaviour of the moments of $Y_{n}$.
\begin{proof}[Proof of Theorem~\ref{the:Yn_Moments}]
According to the definition of $\hat{X}_{n}$ and relation~\eqref{eqn:HatDzu_def} we get for the factorial moments:
\begin{equation*}
  \E\big(\hat{X}_{n}^{\underline{r}}\big) = \frac{r! [z^{n} u^{r}] \hat{D}(z,1+u)}{2^{n}} = \frac{r! [z^{n}] g_{r}(z)}{2^{n}},
\end{equation*}
with $g_{r}(z)$ as defined in Lemma~\ref{lem:HatDzq_Expansion}. Since the dominant singularity of $g_{r}(z)$ relevant for the asymptotic behaviour of the main term is at $z=\rho=\frac{1}{2}$ (see Remark~\ref{rem:Second_Singularity}) with a local expansion stated in above lemma, we can apply basic transfer lemmata~\cite{FlaSed} to obtain for the coefficients:
\begin{align*}
  [z^{n}] g_{r}(z) & = [z^{n}] (r+1) (\frac{1}{8})^{\frac{r}{2}} \frac{1}{(1-2z)^{\frac{r}{2}+1}} \cdot \big(1+\mathcal{O}(\sqrt{1-2z})\big)\\
	& = (r+1) (\frac{1}{8})^{\frac{r}{2}} \frac{2^{n} n^{\frac{r}{2}}}{\Gamma(\frac{r}{2}+1)} \cdot \big(1+\mathcal{O}(n^{-\frac{1}{2}})\big).
\end{align*}
Thus, the asymptotic behaviour of the factorial moments is given by
\begin{equation}\label{eqn:HatXnr_factorial_asymptotics}
  \E\big(\hat{X}_{n}^{\underline{r}}\big) = \frac{(r+1)! (\frac{1}{8})^{\frac{r}{2}}}{\Gamma(\frac{r}{2}+1)} n^{\frac{r}{2}} \cdot \big(1+\mathcal{O}(n^{-\frac{1}{2}})\big), \quad r \ge 0.
\end{equation}
Since the $r$-th integer moments can be obtained by a linear combination of the factorial moments of order $\le r$, due to
$\E\big(\hat{X}_{n}^{r}\big) = \E\big(\hat{X}_{n}^{\underline{r}}\big) + \mathcal{O}\big(\E\big(\hat{X}_{n}^{\underline{r-1}}\big)\big) = \E\big(\hat{X}_{n}^{\underline{r}}\big) \cdot \big(1+\mathcal{O}(n^{-\frac{1}{2}})\big)$ the same asymptotic behaviour \eqref{eqn:HatXnr_factorial_asymptotics} also holds for the raw moments. An application of the duplication formula for the $\Gamma$-function gives then the alternative representation
\begin{equation}\label{eqn:HatXnr_raw_asymptotics}
  \E\big(\hat{X}_{n}^{r}\big) = \frac{\Gamma\big(\frac{r+3}{2}\big)}{2^{\frac{r}{2}-1} \sqrt{\pi}} \, n^{\frac{r}{2}} \cdot \big(1+\mathcal{O}(n^{-\frac{1}{2}})\big), \quad r \ge 0.
\end{equation}
Since $\hat{X}_{n} = X_{n} - n/2 = \sqrt{n} \, Y_{n}$, equation~\eqref{eqn:HatXnr_raw_asymptotics} implies $\E(Y_{n}^{r}) \to \E(G^{r})$ as stated.
\end{proof}

\section{First pure luck guess}
So far, we have been interested in the total number of correct guesses. 
As the guesser follows the optimal strategy, the chances of a correct guess
are always greater or equal $50$ percent. Starting with a deck of $n$ cards, we might be interested in the number of cards $P_n$ (divided by two) remaining in the deck when the first ``pure luck guess'' with only a $50$ percent success chance occurs. By Proposition~\ref{Prop:firstCard} and Theorem~\ref{the:distDecomp}, this can only happen after the ``first phase'' of always guessing the smallest number remaining in the deck has failed and thus finished and so the ``two-color card guessing process'' has been started already. 
Similar to Theorem~\ref{the:distDecomp} we obtain for $P := P_{n}$ the distributional equation
\[
P_n \law I_1 \cdot P^{\ast}_{n-1}
+(1-I_1)(1-I_2)\cdot H_{n-1-J_n,J_n},
\]
where $I_1\law \Be(0.5)$, $I_2 \law \Be(0.5^{n-1})$, and $H_{m_1,m_2}$ denotes the number of cards present, divided by two, in a two-color card guessing game when for the first time a pure luck guess occurs. Additionally, $P^{\ast}_{n-1}$ is an independent copy of $P$ defined on $n-1$ cards. Moreover, as in Theorem~\ref{the:distDecomp}, $J_n\law \Bin^{*}(n-1,p)$ denotes a truncated binomial distribution: 
\[
\P(J_n=j)=\binom{n-1}{j}/(2^{n-1}-1),\quad  0\le j \le n-2.
\]
All random variables $I_1$, $I_2$, $J_n$, as well as $H_{m_1,m_2}$ are mutually independent.

\smallskip

We use a limit law for $H_{m_1,m_2}$, for a certain regime of $m_1,m_2$ when both parameters tending to infinity, relying on results of~\cite{PK2023,KuPanPro2009}. 

First, we require a new distribution, a functional of a L\'evy distributed random variable $L=\Levy(c)$, $c>0$, with density
\begin{equation}
\label{eqn:levy}
f_L(x)=\sqrt{\frac{c}{2\pi}}\frac{e^{-c/(2x)}}{x^{3/2}},\quad x>0.
\end{equation}
\begin{defi}[Reciprocal of a shifted L\'evy distribution]
Let $L=\Levy(c)$, $c>0$. Then, let $R=R(c)$ denote the reciprocal
of the shifted random variable $1+L$:
\[
R= \frac{1}{1+L},\quad \text{with support $(0,1)$}.
\]
The density of $R$ is given by 
\[
f_R(x)=\sqrt{\frac{c}{2\pi}}\cdot\frac1{(1-x)^{3/2}x^{1/2}}\cdot e^{-\frac{cx}{2(1-x)}},\quad 0<x<1.
\]
\end{defi}
This random variable in terms of above density function has been appeared already in several applications.
See for example~\cite{KuPanPro2009} for the limit law of the hitting time in sampling without replacement
or~\cite{HPW2022,HPW2023} for its occurrence in the limit law of an uncover process for random trees.
Moreover, this random variable has appeared earlier in context of the standard additive coalescent,
where also the relation to the L\'evy distribution has been observed by Aldous and Pitman~\cite[Corollary 5 and Theorem 6]{AldousPitman1998}.
We also note the random variable appears as the limit law 
of random dynamics on the edges of a uniform Cayley tree, a so-called "fire on tree" model~\cite{Bertoin2012}. 
In contrast to the L\'evy distribution, the random variable $R$ has integer moments of all orders. In the special case of $c=1$ the moments have a particularly interesting structure~\cite[Lemma 3]{Bertoin2012}:
\[
\E(R^k)=\E\big(\exp(-\chi(2k))\big),
\]
where $\chi(2k)$ is a chi-variable with $2k$ degrees of freedom, with density
\[
\frac{2^{1-k}}{(k-1)!}x^{2k-1}\exp(-x^2/2)dx,\quad x\ge 0.
\]

Finally, we note that it is easy to see that $R$ has the stated density function:
\begin{align*}
F_R(x)&=\P\{R\le x\}=\P\Big\{\frac1{1+L}\le x\Big\}=\P\Big\{\frac1x\le 1+L\Big\}\\
&=\P\Big\{L\ge \frac{1}x-1\Big\}=1-\P\Big\{L < \frac{1-x}{x}\Big\}.
\end{align*}
Consequently, 
\begin{align*}
f_R(x)&=-f_L\big((1-x)/x\big)\cdot (-1)\cdot x^{-2}=\sqrt{\frac{c}{2\pi}}\frac{e^{-cx/(2(1-x))}x^{3/2}}{(1-x)^{3/2}}\cdot \frac1{x^{2}},
\end{align*}
immediately leading to the stated density.

\smallskip

Next, we use the following result. 
\begin{lem}[Hitting time and first pure luck guess]
\label{eqn:lemHit}
Let $H_{m_1,m_2}$ denote the random variable counting the number of remaining cards, divided by two, when for the first time
a pure luck guess happens in the two-color card guessing game, starting with $m_1$ red and $m_2$ black cards. 
Assume further that $m_1,m_2\to\infty$ and $m_2=m_1-\rho\sqrt{m_1}$, with $\rho > 0$.
Then, 
\[
\frac{H_{m_1,m_2}}{m_1}\claw R(\rho^2/2).
\]
\end{lem}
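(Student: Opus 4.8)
The plan is to connect the "first pure luck guess'' quantity $H_{m_1,m_2}$ to a hitting-time parameter of the sampling-without-replacement urn described in Remark~\ref{rem:Urn_LatticPath}, and then invoke the existing limit law for that hitting time from~\cite{PK2023,KuPanPro2009}. In the urn/lattice-path picture, the guesser makes a pure luck guess precisely when the two remaining color-counts are equal, i.e., when the sample path first reaches the diagonal $\{(k,k)\}$. Thus $H_{m_1,m_2}$, the number of remaining cards divided by two at the first pure luck guess, should equal $k$ where $(k,k)$ is the first diagonal point hit by the weighted lattice path started at $(m_1,m_2)$. The first step is therefore to make this identification precise: starting from $m_1>m_2$, the counts stay unequal (so every guess has success probability strictly above $1/2$) until the path first hits the diagonal, at which moment a genuine $50\%$-guess is forced. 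This reduces the lemma to a statement about the first hitting time of the diagonal in the urn.

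The second step is to recall the precise form of the diagonal-hitting limit law from~\cite{KuPanPro2009} and match parameters. The cited works express the limiting distribution of the (normalized) first diagonal-hitting position in terms of exactly the reciprocal-of-shifted-L\'evy random variable $R(c)$ introduced above; indeed the remark after the definition explicitly points to~\cite{KuPanPro2009} for "the limit law of the hitting time in sampling without replacement.'' So the core of the proof is to show that, under the scaling $m_2 = m_1 - \rho\sqrt{m_1}$, the hitting-time limit law has parameter $c = \rho^2/2$. I would verify this by comparing the Brownian/diffusion heuristic: the difference process $D_t = (\text{red left}) - (\text{black left})$ behaves, after time/space rescaling by $m_1$ and $\sqrt{m_1}$ respectively, like a Brownian motion started at initial gap $\rho\sqrt{m_1}$, and the first time the gap closes is governed by a first-passage (L\'evy) law whose parameter is fixed by the initial displacement $\rho$, giving $c=\rho^2/2$.

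Concretely, I would establish $H_{m_1,m_2}/m_1 \claw R(\rho^2/2)$ by computing the limiting distribution function directly: for fixed $x\in(0,1)$, the event $\{H_{m_1,m_2}/m_1 \le x\}$ is the event that the path reaches the diagonal only after the total remaining count has dropped below $2xm_1$, equivalently that the diagonal is not hit while the remaining fraction exceeds $x$. Using the known asymptotics for the gap process — the urn weights at $(k_1,k_2)$ give a drift pushing apart proportional to the current imbalance over the remaining size — one translates this into a L\'evy first-passage probability and recovers the distribution function $F_R$ stated in the definition. The \textbf{main obstacle} I anticipate is the parameter-matching calculation: pinning down that the correct constant is exactly $c=\rho^2/2$ (rather than $\rho^2$ or $\rho^2/4$) requires care with the diffusive scaling of the gap process and with how the variance accumulates along the path, and it is here that I would lean most heavily on the explicit hitting-time formula from~\cite{KuPanPro2009} rather than re-deriving the diffusion limit from scratch.
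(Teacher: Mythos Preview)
Your proposal is correct and follows essentially the same route as the paper: identify $H_{m_1,m_2}$ with the first hitting of the diagonal in the sampling-without-replacement lattice path (via Remark~\ref{rem:Urn_LatticPath}), then invoke the hitting-time limit law from~\cite{KuPanPro2009} and match parameters. The paper is simply more direct on the second step --- it plugs $t=1$, $s=0$ into \cite[Theorem~2~(4)]{KuPanPro2009} and reads off the limiting density as $f_R$ with $c=\rho^2/2$ --- so your Brownian heuristic and the distribution-function computation you sketch are unnecessary once you are willing to cite that theorem, and the ``main obstacle'' you flag (pinning down $c=\rho^2/2$) is resolved by the citation rather than by a fresh diffusion argument.
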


\begin{proof}
We combine arguments of~\cite{PK2023,KuPanPro2009}: by the results of~\cite{PK2023}, the weighted sample paths of the two-color card guessing game coincide with the sample paths of the sampling without replacement urn (see also Remark~\ref{rem:Urn_LatticPath}). In particular, this holds with respect
to the hitting position of the diagonal $x=y$, as a crossing of the diagonal without hitting cannot happen. 
In~\cite{KuPanPro2009} such hitting positions have been studied in a general setting for paths starting at $(m_{1},m_{2})$, with $m_{1} \ge t m_{2} + s$, and absorbing lines $y=x/t -s/t$, for $t\in\N$ and $s\in\N_0$. For our purpose we set $t=1$ and $s=0$ in~\cite[Theorem 2 (4)]{KuPanPro2009}, which gives for $0 < x < 1$:
\begin{equation*}
  \P\Big\{\frac{H_{m_{1},m_{2}}}{m_{1}} \le x\Big\} \sim \int_{0}^{x} \frac{\rho}{\sqrt{2} \, \sqrt{2\pi}} \frac{1}{\sqrt{u} \, (1-u)^{\frac{3}{2}}} \cdot e^{-\frac{\rho^{2} u}{4(1-u)}} du = \int_{0}^{x} f_{R}(u) du,
\end{equation*}
with $f_{R}(x)$ the density of the reciprocal of a shifted L\'evy distribution with parameter $c=\rho^{2}/2$. Thus, this shows the stated limit law.
\end{proof}
In order to obtain the limit law of $P_n$ we require the limit law of $H_{n-1-J_n,J_n}$, which will be determined next.
\begin{lem}
\label{lem:twoColorBinBetaLimit}
The random variable $H_{n-1-J_n,J_n}$ has an Arcsine limit law $\beta(\frac12,\frac12)$:
\[
\frac{H_{n-1-J_n,J_n}}{\frac{n}2} \claw \beta\big(\frac12,\frac12\big),
\]
i.e., after suitable scaling, it converges in distribution to a Beta-distributed r.v.\ with parameters $1/2$ and $1/2$ that has the probability density function
\begin{equation*}
  f_{\beta}(x) = \frac{1}{\pi} \frac{1}{\sqrt{x (1-x)}}, \quad 0 < x < 1.
\end{equation*}
\end{lem}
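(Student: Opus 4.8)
The plan is to follow closely the strategy used in the proof of Lemma~\ref{lem:twoColorBinLimit}: condition on the truncated binomial $J_n$, pass to a Gaussian integral via de~Moivre--Laplace, insert the limit law of Lemma~\ref{eqn:lemHit} for the two-color quantity $H_{m_1,m_2}$, and finally evaluate the resulting integral in closed form. Writing $G_n(x)=\P\{H_{n-1-J_n,J_n}/(n/2)\le x\}$ and conditioning yields $G_n(x)=\sum_{j=0}^{n-2}\P\{H_{n-1-j,j}/(n/2)\le x\}\,\P\{J_n=j\}$. Exploiting the symmetry $H_{m_1,m_2}=H_{m_2,m_1}$ together with the (asymptotic) symmetry of the binomial weights, I would restrict to $j\ge\tfrac{n-1}{2}$ and double, then substitute $j=\tfrac{n-1}{2}+t\tfrac{\sqrt{n-1}}{2}$ so that, by de~Moivre--Laplace, the sum passes to $2\int_0^\infty\P\{H_{M_1,M_2}/(n/2)\le x\}\,\tfrac{e^{-t^2/2}}{\sqrt{2\pi}}\,dt$, where $M_1=j$ is the larger and $M_2=n-1-j$ the smaller parameter.

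For each fixed $t>0$ the parameters satisfy $M_1-M_2=t\sqrt{n-1}$ and $M_1\sim n/2$, so $\rho=\rho_n(t)=(M_1-M_2)/\sqrt{M_1}\to t\sqrt2$ and Lemma~\ref{eqn:lemHit} applies with $c=\rho^2/2\to t^2$; combined with $M_1/(n/2)\to1$ and Slutsky's theorem this gives $H_{M_1,M_2}/(n/2)\claw R(t^2)$. Using $R(c)=1/(1+L)$ and the L\'evy cumulative distribution function $\P\{L\le y\}=\operatorname{erfc}(\sqrt{c/(2y)})$, I would record the closed form $F_{R(c)}(x)=\operatorname{erf}(\sqrt{cx/(2(1-x))})$, so that the integrand converges pointwise (for a.e.\ $t$) to $\operatorname{erf}(at)$ with $a:=\sqrt{x/(2(1-x))}$. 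Since the integrand is bounded by $1$ and the Gaussian density is integrable, dominated convergence yields
\[
G_n(x)\longrightarrow G(x)=2\int_0^\infty \operatorname{erf}(at)\,\frac{e^{-t^2/2}}{\sqrt{2\pi}}\,dt.
\]

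To finish, I would evaluate this integral via the standard identity $\int_0^\infty\operatorname{erf}(at)e^{-b^2t^2}\,dt=\frac{1}{b\sqrt\pi}\arctan(a/b)$ (checked by differentiating under the integral sign in $a$), with $b=1/\sqrt2$. This gives $G(x)=\frac{2}{\pi}\arctan(a\sqrt2)=\frac{2}{\pi}\arctan\!\big(\sqrt{x/(1-x)}\big)=\frac{2}{\pi}\arcsin(\sqrt x)$, the last equality from the substitution $\theta=\arcsin(\sqrt x)$. This is precisely the cumulative distribution function of the $\beta(\tfrac12,\tfrac12)$ law, and differentiation returns the density $f_\beta(x)=\frac{1}{\pi\sqrt{x(1-x)}}$. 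Equivalently, writing $\operatorname{erf}(a|Z_1|)=\P\{|Z_2|\le a\sqrt2\,|Z_1|\mid Z_1\}$ for independent standard normals $Z_1,Z_2$, the integral equals $\P\{|Z_2/Z_1|\le a\sqrt2\}=\frac{2}{\pi}\arctan(a\sqrt2)$, recovering the arcsine law from the half-Cauchy distribution of $|Z_2/Z_1|$.

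The main obstacle I anticipate is justifying the passage to the Gaussian integral jointly with the pointwise use of Lemma~\ref{eqn:lemHit}: that limit law requires $\rho>0$, which degenerates on the diagonal $t=0$, so one must argue that this boundary region (carrying vanishing Gaussian mass, with a bounded integrand) is negligible, and that the convergence $H_{M_1,M_2}/M_1\claw R(t^2)$ persists with the $n$-dependent parameter $\rho_n(t)\to t\sqrt2$ rather than a fixed $\rho$. This last point rests on the continuity in $\rho$ of the asymptotics of \cite{KuPanPro2009} underlying Lemma~\ref{eqn:lemHit}, exactly as in the treatment of Lemma~\ref{lem:twoColorBinLimit}.
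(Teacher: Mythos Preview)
Your proof is correct and follows essentially the same route as the paper: condition on $J_n$, exploit the symmetry of $H_{m_1,m_2}$ and the binomial weights, pass to a Gaussian integral via de~Moivre--Laplace, and insert the limit law of Lemma~\ref{eqn:lemHit} with $c=t^{2}$. The only minor difference is in the final evaluation of the resulting integral: the paper differentiates in $x$ first and then computes the elementary integral $\int_0^\infty t\,e^{-t^2(1+g)/2}\,dt=\tfrac{1}{1+g}$ with $g=x/(1-x)$ to obtain the arcsine density directly, whereas you keep the distribution function, use the $\erf$--Gaussian identity, and recognize $\tfrac{2}{\pi}\arcsin(\sqrt x)$.
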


In a way analogous to the proof of Theorem~\ref{the:Yn_LimitLaw}, this lemma readily leads to the main result of this section. 
\begin{theorem}
The random variable $P_n$ counting the number of remaining cards, divided by two, when the first pure luck guess with only a $50$ percent success chance occurs, starting with $n$ ordered cards and performing a single riffle shuffle, has a $\beta\big(\frac12,\frac12\big)$ limit law, a so-called Arcsine distribution:
\[
\frac{P_n}{\frac{n}2}\to \beta\big(\frac12,\frac12\big).
\]
\end{theorem}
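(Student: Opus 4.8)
The plan is to mimic the proof of Theorem~\ref{the:Yn_LimitLaw}, translating the distributional equation for $P_n$ stated above into a recursion for the cumulative distribution function and then iterating it. Fix a continuity point $0<x<1$ and condition on $I_1$ and $I_2$, which are independent of the remaining random variables. When $I_1=1$ (probability $\tfrac12$) one obtains the independent copy $P^{\ast}_{n-1}$; when $I_1=0$ and $I_2=0$ (probability $\tfrac12(1-2^{-(n-1)})$) one obtains $H_{n-1-J_n,J_n}$; and when $I_1=0$ and $I_2=1$ (probability $2^{-n}$) the value is $0$, which is $\le \tfrac{n}{2}x$ for $x>0$. This yields
\[
\P\Big\{P_n \le \tfrac{n}{2} x\Big\} = \tfrac{1}{2}\,\P\Big\{P^{\ast}_{n-1} \le \tfrac{n}{2} x\Big\} + \tfrac{1}{2}\Big(1 - \tfrac{1}{2^{n-1}}\Big)\P\Big\{H_{n-1-J_n,J_n} \le \tfrac{n}{2} x\Big\} + \tfrac{1}{2^n},
\]
where the final summand is exponentially small in $n$.

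Next I would unroll this recursion exactly as in the proof of Theorem~\ref{the:Yn_LimitLaw}. Since the factor $\tfrac12(1-2^{-(n-1)})$ differs from $\tfrac12$ only by an exponentially small amount and the $2^{-n}$ corrections are summable, iterating down to a bounded deck size and discarding the negligible residual term $2^{-k}\,\P\{P_{n-k}\le \tfrac{n}{2}x\}$ gives, as $n\to\infty$,
\[
\P\Big\{P_n \le \tfrac{n}{2} x\Big\} \sim \sum_{\ell \ge 1} \frac{1}{2^\ell}\,\P\Big\{H_{n-\ell-J_{n-\ell},J_{n-\ell}} \le \tfrac{n}{2} x\Big\}.
\]
I would then invoke Lemma~\ref{lem:twoColorBinBetaLimit}: for each fixed $\ell$, since $(n-\ell)/2\sim n/2$ the threshold $\tfrac{n}{2}x$ is asymptotically the same as $\tfrac{n-\ell}{2}x$, so each summand converges to the arcsine distribution function $F_\beta(x)=\frac{2}{\pi}\arcsin\sqrt{x}$. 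As the weights $2^{-\ell}$ form a geometric series summing to $1$ and each summand is bounded by $1$, dominated convergence for series lets me pass the limit inside the sum, giving $\P\{P_n \le \tfrac{n}{2}x\}\to F_\beta(x)$, i.e.\ the claimed $\beta(\tfrac12,\tfrac12)$ limit law.

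I expect the genuinely delicate point to be the justification of the interchange of limit and infinite summation together with the termination of the recursion: the unrolling only bottoms out at a bounded deck size, so the tail indices $\ell$ comparable to $n$ must be shown to contribute negligibly, which follows from the fact that their total weight $\sum_{\ell>L}2^{-\ell}$ is arbitrarily small uniformly in $n$. A secondary point requiring care is the degenerate contribution of the identity (all-correct) permutation, for which no pure luck guess ever occurs; since the arcsine law has no atom at the boundary and this exceptional event has exponentially small probability, it does not affect the limit.
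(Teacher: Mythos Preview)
Your proposal is correct and follows precisely the approach indicated in the paper, which simply states that the theorem follows from Lemma~\ref{lem:twoColorBinBetaLimit} ``in a way analogous to the proof of Theorem~\ref{the:Yn_LimitLaw}'' without spelling out any details. In fact, your write-up is more careful than the paper's, since you explicitly address the exponentially small correction terms, the tail of the geometric series, and the degenerate identity-permutation case.
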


\begin{proof}[Proof of Lemma~\ref{lem:twoColorBinBetaLimit}]
We proceed similar to the proof of Lemma~\ref{lem:twoColorBinLimit}. 
We study the distribution function $F(k)=\P\{H_{n-1-J_n,J_n}\le k\}$ and obtain
\[
F(k)=\sum_{j=0}^{n-2}\frac{\binom{n-1}j}{2^{n-1}-1}\P\{H_{n-1-j,j}\le k\}.
\]
We use the symmetry of the binomial distribution around $\lfloor n/2 \rfloor$ as well as $H_{m_{1},m_{2}} = H_{m_{2},m_{1}}$ and approximate the binomial distribution using the de~Moivre-Laplace theorem. 
This leads to 
\[
F(k)\sim 2 \int_{\lfloor n/2 \rfloor}^n e^{-\frac{(j-\mu_n)^2}{2\sigma_n^2}}\cdot \frac{1}{\sigma_n\sqrt{2\pi}} \cdot
\P\{H_{j,n-1-j}\le k\}dj,
\]
where $\mu_n=n/2$ and $\sigma_n=\sqrt{n}/2$. Changing the range of integration and the choice $k=x\cdot n/2$, with $0<x<1$,
leads then, together with Lemma~\ref{eqn:lemHit}, to the improper integral
\begin{align*}
F(k)&=\P\Big\{\frac{H_{n-1-J_n,J_n}}{n/2}\le x\Big\}\\
&\sim 2\int_0^{\infty} e^{-t^2/2}\frac{1}{\sqrt{2\pi}}\int_0^{x}\frac{t}{\sqrt{2\pi u}(1-u)^{3/2}}\cdot e^{-\frac{t^2 u}{2(1-u)}}du\ dt.
\end{align*}
Derivation with respect to $x$ gives then the desired density function, 
where the arising improper integral is readily evaluated:
\[
\int_0^{\infty}e^{-t^2/2}\cdot t\cdot e^{-t^{2}g/2}dt=\frac{1}{1+g}, \qquad \text{for $g \ge 0$}.
\]
Setting $g=x/(1-x)$ immediately yields the Arcsine law density function
\[
  f_{\beta}(x) = \frac1{\pi}\frac1{\sqrt{x(1-x)}},\quad 0<x<1. 
\]
\end{proof}

Finally, we note that the number of guesses with success probability one, i.e., where the guesser knows in advance to be correct, 
can be treated in a similar way.

\section*{Declarations of interest}
The authors declare that they have no competing financial or personal interests that influenced the work reported in this paper. 

\bibliographystyle{cyrbiburl}
\bibliography{CardGuessingOneRiffle-refs}{}

%\section*{General case: n different colors}
%\subsection*{Decomposition}
%\subsection{Hitting times}

\end{document}